\newtheorem{thm}{Theorem}
\newtheorem{pro}{Proposition}
\author{Lisa Nilsson \& Mikael Passare}
\address{Department of Mathematics,
Stockholm University,
\hfill\break
\phantom{\hskip.54cm} SE-106 91  Stockholm, Sweden}
\email{lisa@math.su.se, passare@math.su.se}
\newtheorem{dfn}{Definition}
\newcommand{\Log}{\operatorname{Log}}
\newcommand{\Arg}{\operatorname{Arg}}
\newcommand{\Exp}{\operatorname{Exp}}
\def\Im{{\rm Im\, }}
\def\Re{{\rm Re\, }}
\def\bee{\begin{equation*}}
\def\eee{\end{equation*}}
\def\be{\begin{equation}}
\def\ee{\end{equation}}
\title{Mellin transforms of multivariate rational functions}
\date{\today}
\begin{document}

\begin{abstract}

This paper deals with Mellin transforms of rational functions $g/f$ in several variables. We prove that 
the polar set of such a Mellin transform consists of finitely many families of parallel hyperplanes, with all planes in each such family being
integral translates of a specific facial hyperplane of the Newton polytope of the denominator $f$. The Mellin transform is naturally related to 
the so called coamoeba $\mathcal{A}'_f:=\text{Arg}\,(Z_f)$, where
$Z_f$ is the  zero locus of $f$ and $\text{Arg}$ denotes the mapping that takes each coordinate to its argument. In fact, each connected component of
the complement of the coamoeba $\mathcal{A}'_f$ gives rise to a different Mellin transform. The dependence of the Mellin transform on the coefficients
of $f$, and the relation to the theory of $A$-hypergeometric functions is also discussed in the paper. 
\end{abstract}

\maketitle

\section{Introduction}

\noindent The Mellin transform $M_h$ of a locally integrable function $h$ on the positive real axis is defined by the formula 
\be\label{mellt} M_h(s)=\int_0^\infty h(z)\,z^s\,\frac{dz}{z}\,,\ee

\noindent provided the integral converges. Here  $s$ is a complex variable $s=\sigma+it$. 
The Mellin transform is closely related to the Fourier--Laplace transform via an exponential change of variables. More precisely, the value of $M_h(s)$ is 
equal to the Fourier--Laplace transform of the function $x\mapsto h(e^{-x})$ evaluated at the point $-is$. 

\smallskip
In this paper we consider Mellin transforms of rational functions $h=g/f,$ where $g$ and $f$ are polynomials. Since the general case is easily settled once we have fully investigated the special case where $g\equiv1$, and since this will simplify our notation  and therefore clarify our argument, we shall focus mainly on the case $h=1/f$.  

\smallskip
Let us start by considering the one-variable situation. Given a polynomial $$f(z)=a_0+a_1z+\ldots+a_mz^m$$ we assume for the moment that its coefficients 
$a_0,\ldots,a_m$ are positive numbers.  Then the integral (\ref{mellt}) with $h=1/f$ converges and defines an analytic function in the vertical strip $0<\sigma<m$. 

\smallskip
One can in fact make a meromorphic continuation of this Mellin transform and write it as
\begin{equation}\label{psi}M_{1/f}(s)=\Phi(s)\Gamma(s)\Gamma(m-s)\,,\end{equation}

\noindent  where $\Phi$ is an entire function. 
To see this, let us first look at the case of a simple fraction $1/(a+bx)$.
In this case one has the explicit formula
$$M_{1/f}(s)=a^{s-1}b^{-s}\Gamma(s)\Gamma(1-s),$$
 
\noindent which can be easily established for instance by means of a residue computation.
Now, considering a general product 
$$f(z)=\prod_{j=1}^m(\alpha_j+z),$$
one can decompose $1/f$ into a sum of simple fractions and hence immediately deduce that 
its Mellin transform will be of the form $\Psi(s) \Gamma(s)\Gamma(1-s)$,
for some entire function $\Psi$. In fact, a straightforward residue calculation shows that 
$$\Psi(s)=-e^{-\pi is}\sum \mathrm{res}[z^{s-1}/f(z)]\,,$$

\noindent and by the theorem on the total sum of residues it then follows that
$$\Psi(1)=\Psi(2)=\ldots=\Psi(m-1)=0\,.$$
This means that we obtain formula (\ref{psi}) with $\Phi(s)=\Psi(s)/\bigl[(1-s)\cdots(m-1-s)\bigr]$. 

\smallskip
We have thus found that all the poles of the meromorphic continuation are located at the two integer sequences $0,-1,-2,\ldots $
and $m+1,m+2,\ldots$ emanating from the end points of the interval $[0,m]$. Notice that this interval is the Newton polytope of 
our one-variable polynomial $f$. 

\smallskip
As a matter of fact, in the above discussion we did not actually need to assume that the coefficients $a_0,a_1,\ldots,a_m$ be positive. 
A necessary and sufficient condition for the argument to work, and in particular for the integral to converge, is that $f(z)\neq 0$ 
for all real positive values of $z$. Another way of formulating this latter condition is that $f$ should have  no roots 
with argument zero.

\smallskip
We now turn to the multidimensional case, and we begin looking at a simple example with the denominator $f$ being an
affine linear polynomial.

\bigskip
\noindent\textbf{Example~1.} 
\noindent Consider the polynomial $f(z)=1+z_1+z_2$. The Mellin transform of the corresponding rational function $1/f$ is then given by the integral

$$\int_0^\infty\!\!\!\int_0^\infty\!\frac{z_1^{s_1}z_2^{s_2}}{1+z_1+z_2}\frac{dz_1dz_2}{z_1z_2}=\int_0^\infty\!\!\!\int_0^\infty\!\!\!\int_0^\infty\! z_1^{s_1-1}z_2^{s_2-1}\,e^{-v(1+z_1+z_2)}dz_1dz_2dv\,,$$

\bigskip
\noindent which after the coordinate change $t=vz_1$, $u= vz_2$ becomes 

$$\int_0^\infty\!\!\! t^{s_1-1}e^{-t}dt\int_0^\infty \!\!\!u^{s_2-1}e^{-u}du\int_0^\infty \!\!\!v^{-s_1-s_2}\,e^{-v}dv = 
\Gamma(s_1)\Gamma(s_2)\Gamma(1-s_1-s_2)\,.$$

\bigskip
\smallskip 
We shall see in this paper that the fact that the poles of the Mellin transform are determined by a product of $\Gamma$-functions is not unique for the special cases we have
considered so far. In fact, the Mellin transform of a rational function in any number of variables will turn out to be always a product of $\Gamma$-functions 
in linear arguments, multiplied by some entire function, so that the only poles of the Mellin transform are the poles of the $\Gamma$-functions. 
Moreover, the configuration of polar hyperplanes is governed by the Newton polytope of the denominator polynomial, with one family of parallel hyperplanes emanating from each facet of the Newton polytope. Many of the results have been announced previously in \cite{N}. Let us note in passing that a similar phenomenon can be observed also for Mellin transforms of more general meromorphic functions, with transcendental denominators. As an illustration of this we recall the classical formulas
$$M_{1/e^z}=\Gamma(s)\qquad \text{and}  \qquad 
M_{1/(e^z-1)}=\zeta(s)\Gamma(s)=(s-1)\zeta(s)\Gamma(s-1)\,,$$
where $\zeta$ denotes the Riemann zeta function.
\bigskip

\section{Newton polytopes and (co)amoebas}

\noindent Throughout this paper $f$ will denote a complex Laurent polynomial 

\begin{equation}\label{polyn}f(z)=\sum_{\alpha\in A}a_\alpha z^\alpha,\quad a_\alpha\in\mathbb{C}_*\,,\end{equation}

\smallskip

\noindent where $A\subset\mathbb{Z}^n$ is a finite subset and
$\mathbb{C}_*$ denotes the punctured complex plane $\mathbb{C}\setminus{\{0\}}$. Here we use the standard notation $z^\alpha=z_1^{\alpha_1}\cdots z_n^{\alpha_n}$ for $z=(z_1,\ldots,z_n)\in\mathbb{C}_*^n$.
\smallskip

The Newton polytope $\Delta_f$ of the polynomial $f$ is defined to be the convex hull of $A$ in $\mathbb{R}^n$. We shall primarily be interested in the case where $\Delta_f$
has a nonempty interior. Like any other polytope, 
the Newton polytope $\Delta_f$ may be alternatively viewed as the intersection of a finite number of halfspaces: 

\begin{equation}\label{newtonpol}\Delta_f=\bigcap_{k=1}^N\bigl\{\sigma\in\mathbb{R}^n\,;\,\langle\mu_k,\sigma\rangle\geq\nu_k\bigr\}\,,\end{equation}

\noindent where the $\mu_k\in\mathbb{Z}^n$ are primitive integer vectors in the inward normal direction of the facets of  $\Delta_f$, and the $\nu_k\in\mathbb{Z}$ are integers. 
\smallskip

In general we will let $\Gamma$ denote a face of the Newton polytope of arbitrary dimension, $0\leq\mathrm{dim}(\Gamma)\leq\mathrm{dim}(\Delta_f)$, 
and we define the relative interior $\mathrm{relint}(\Gamma)$ of such a face to be the interior of $\Gamma$ viewed as a subset of the lowest dimensional hyperplane 
containing it. For each face $\Gamma$ we also introduce the corresponding truncated polynomial 

$$f_\Gamma=\sum_{\alpha\in\Gamma}a_\alpha z^\alpha\,,$$

\noindent consisting of those monomials from the original polynomial $f$ whose exponents are contained in the face $\Gamma$ of the Newton polytope $\Delta_f$.
\smallskip

The amoeba $\mathcal{A}_f$ and the coamoeba $\mathcal{A}'_f$ of a polynomial $f$ are defined to be the images of the zero set $Z_f=\{ z\in\mathbb{C}^n_*\,;\ f(z)=0\,\}$
under the real and imaginary parts, $\Log$ and $\Arg$ respectively, of the coordinatewise complex logarithm mapping. More precisely, one has
$$A_f=\Log(Z_f)\qquad \text{and}  \qquad A'_f=\Arg(Z_f)\,,$$
where $\Log(z)=(\log|z_1|,\ldots,\log|z_n|)$ and $\Arg(z)=(\arg(z_1),\ldots,\arg(z_n))$. 
\smallskip

Writing $w=x+i\theta\in\mathbb{C}^n$ and $z=\Exp(w)=(\exp(w_1),\ldots,\exp(w_n))$, 
one obtains the identities $x=\Log(z)=\Re(w)$ and $\theta=\Arg(z)=\Im(w)$,
as illustrated in the following picture.

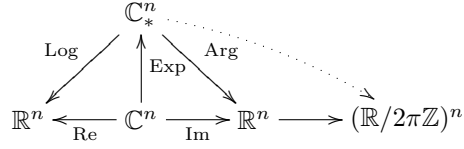
\begin{figure}[H]
\centering

$$\xymatrix{  
                    &{\mathbb{C}^n_*}\ar[dl]_{\Log}  \ar[dr]^{\Arg}  \ar@/^/@{.>}[drr] & \\  
            {\mathbb{R}^n} &  {\mathbb{C}^n}\ar[u]_{\Exp} \ar[l]^{\mathrm{Re}} \ar[r]_{\mathrm{Im}}  &   {\mathbb{R}^n} \ar[r] & {(\mathbb{R}/2\pi\mathbb{Z})^n}}$$                              
\vspace{.5cm}
\caption{Real and imaginary parts of the complex logarithm mapping}\label{avb}

\end{figure}

The amoeba $\mathcal{A}_f$ is a subset in $\mathbb{R}^n$, whereas the coamoeba $\mathcal{A}'_f$ can be viewed as being located either in the $n$-dimensional torus $(\mathbb{R}/2\pi\mathbb{Z})^n$ or as a multiply periodic subset of $\mathbb{R}^n$. This reflects the multivaluedness of the argument mapping.
\smallskip

For brevity of notation we denote the amoeba and the coamoeba of a truncated polynomial $f_\Gamma$ by $\mathcal{A}_\Gamma$ and $\mathcal{A}'_\Gamma$. 
\bigskip

 \section{Mellin transforms of rational functions}

\noindent The natural generalization to several variables of the standard Mellin transform of a rational function $1/f$ is given by  
the integral

\be\label{mellint} M_{1/f}(s)=\int_{\mathbb{R}_+^n}\frac{ z^{s}}{{f(z)}}\frac{dz_1\wedge\ldots\wedge dz_n}{z_1\cdots z_n}=\int_{\mathbb{R}^n} \frac{e^{\langle s,x\rangle}}{f(e^x)}\,dx_1\wedge\ldots\wedge x_n,\ee

\noindent where $\mathbb{R}^n_+=(0,\infty)^n$ denotes the positive orthant in $\mathbb{R}^n$. In order for such an integral to converge 
one has to make some assumptions about the exponent vector $s$ and also about the denominator $f$. It turns out that it is not
enough to demand only that $f$ be non-vanishing on $\mathbb{R}^n_+$.
\smallskip

\begin{dfn} A polynomial $f$ is said to be \emph{completely non-vanishing} on a set $X$ if for all faces $\Gamma$ of the Newton polytope $\Delta_f$ the truncated 
polynomial $f_\Gamma$ has no zeros on $X$. In particular, the polynomial $f$ itself does not vanish on $X$.
\end{dfn}\smallskip

\noindent{\bf Remark.}\ This concept of completely non-vanishing polynomials is closely related to the notion of quasielliptic polynomials discussed in \cite{ET}.
\smallskip

\begin{thm}\label{expgro}
If the polynomial $f$ is completely non-vanishing on the positive orthant $\mathbb{R}^n_+$ then the integral (\ref{mellint}) converges and defines an 
analytic function in the tube domain $\bigl\{\,s\in\mathbb{C}^n\,;\ \text{\rm Re}\,s=\sigma\in\text{\rm int}\,\Delta_f\,\bigr\}$. 
\end{thm}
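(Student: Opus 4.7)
Passing to logarithmic coordinates $x=\Log z$, the integral becomes $\int_{\mathbb{R}^n} e^{\langle s,x\rangle}/f(e^x)\,dx$, and writing $s=\sigma+i\tau$ we have $|e^{\langle s,x\rangle}|=e^{\langle\sigma,x\rangle}$. Absolute convergence thus reduces to showing
\begin{equation*}
I(\sigma):=\int_{\mathbb{R}^n}\frac{e^{\langle\sigma,x\rangle}}{|f(e^x)|}\,dx<\infty
\end{equation*}
for every $\sigma\in\mathrm{int}\,\Delta_f$. Analyticity in $s$ on the tube domain will then follow once convergence is uniform on compact subsets of the tube, by a standard differentiation-under-the-integral argument (or Morera). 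Since complete non-vanishing applied to the top face $\Gamma=\Delta_f$ gives $f\neq0$ on $\mathbb{R}_+^n$, the function $|f(e^x)|$ is continuous and strictly positive on all of $\mathbb{R}^n$, and the only issue is controlling the integrand at infinity.

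Let $h(u):=\max_{\alpha\in A}\langle\alpha,u\rangle$ be the support function of $\Delta_f$, extended to $\mathbb{R}^n$ by positive homogeneity. The condition $\sigma\in\mathrm{int}\,\Delta_f$ is precisely $\langle\sigma,u\rangle<h(u)$ for every $u\in S^{n-1}$, so by compactness we obtain a uniform gap $h(u)-\langle\sigma,u\rangle\geq\delta>0$. The strategy is to prove the matching uniform lower bound
\begin{equation*}
|f(e^x)|\geq c\,e^{h(x)}\qquad\text{for all }|x|\geq T,
\end{equation*}
which then dominates the integrand by $c^{-1}e^{-\delta|x|}$ outside a ball, giving finiteness of $I(\sigma)$.

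To establish this bound I partition $S^{n-1}$ via the normal fan of $\Delta_f$: the relatively open normal cone to a face $\Gamma$ of $\Delta_f$ consists of directions $u$ for which exactly the exponents in $\Gamma$ realize $h(u)$. Each such cone I thicken slightly to an open set $U_\Gamma\subset S^{n-1}$ on which $\langle\alpha,u\rangle\leq h(u)-\eta_\Gamma$ for all $\alpha\notin\Gamma$, with some $\eta_\Gamma>0$; finitely many $U_\Gamma$ cover the sphere. For $x$ in the cone over $U_\Gamma$ one has the splitting
\begin{equation*}
f(e^x)=f_\Gamma(e^x)+\sum_{\alpha\notin\Gamma}a_\alpha e^{\langle\alpha,x\rangle},
\end{equation*}
where the second term has modulus at most $C\,e^{h(x)-\eta_\Gamma|x|}$ and is negligible for large $|x|$ provided one can show $|f_\Gamma(e^x)|\geq c_\Gamma\,e^{h(x)}$ uniformly on this cone.

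This last estimate is the main technical obstacle. Exploiting the fact that $f_\Gamma(e^x)$ factors as $e^{\langle\alpha_0,x_\perp\rangle}f_\Gamma(e^{x_\parallel})$, where $x_\parallel$ and $x_\perp$ are the components of $x$ parallel and orthogonal to $\mathrm{aff}(\Gamma)$ and $\alpha_0\in\Gamma$ is arbitrary, and that $h$ splits analogously on the cone over $U_\Gamma$, the desired bound reduces to the same statement one dimension lower: that the Laurent polynomial $f_\Gamma$, viewed on the positive real part of the torus attached to $\mathrm{aff}(\Gamma)$, is bounded below by a constant multiple of $e^{h_\Gamma(x_\parallel)}$. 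This is proved by induction on $\dim\Gamma$, the case $\dim\Gamma=0$ being trivial, and the inductive step being obtained by repeating the normal-fan argument above for $f_\Gamma$ in place of $f$, using crucially that every face of $\Gamma$ is itself a face of $\Delta_f$, so that the complete non-vanishing hypothesis descends to all the truncations one encounters. Once $I(\sigma)<\infty$ has been established, the estimates depend on $\sigma$ only through the gap $\delta$, which is uniform on compact subsets of $\mathrm{int}\,\Delta_f$, and analyticity on the tube domain follows.
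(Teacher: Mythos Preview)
Your argument is correct and follows essentially the same route as the paper: establish an exponential lower bound on $|f(e^x)|$ by decomposing $\mathbb{R}^n$ into cones indexed by faces $\Gamma$ of $\Delta_f$, write $f=f_\Gamma+g_\Gamma$ on each cone, control $g_\Gamma$ by the gap in the support function, and handle $f_\Gamma$ by induction on dimension, using that faces of $\Gamma$ are faces of $\Delta_f$ so the complete non-vanishing hypothesis descends. The paper organizes the same ideas slightly differently---it fixes a point $\sigma_\Gamma\in\mathrm{relint}(\Gamma)$, builds auxiliary polytopes $\Delta_\Gamma=\mathrm{conv}((A\setminus\Gamma)\cup\sigma_\Gamma)$ and takes their outer normal cones, and inducts on the ambient dimension $n$ rather than on $\dim\Gamma$---but the substance is identical. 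Your support-function formulation $|f(e^x)|\ge c\,e^{h(x)}$ is in fact the cleaner statement, being independent of $\sigma$; the paper's inequality $|f(e^x)|e^{-\langle\sigma,x\rangle}\ge c\,e^{k|x|}$ follows from it via the uniform gap $h(u)-\langle\sigma,u\rangle\ge\delta$. One small correction: your factorization should read $f_\Gamma(e^x)=e^{\langle\alpha_0,x\rangle}\cdot(\text{function of }x_\parallel)$, with the full $x$ in the exponential prefactor rather than $x_\perp$; the argument is unaffected.
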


\begin{proof} It will suffice to prove that for any given $s$ with $\sigma\in\text{\rm int}\,\Delta_f$ there are
positive constants $c$, $k>0$ such that

\begin{equation}\label{olik} \big|f(e^x)e^{-\langle s,x\rangle}\big| = \big|f(e^x)\big|\,e^{-\langle\sigma,x\rangle}\ge c\,e^{k|x|}\,,\quad x\in\mathbb{R}^n\,.\end{equation}

\smallskip

The proof is by induction on the dimension $n$. The case $n=1$ is easy. Let $\alpha$ and $\beta$ with $\alpha<\beta$ be the two endpoints of $\Delta_f$. Then for sufficiently large negative $x$ one has
$$
\big|f(e^x)\big|\,e^{-\sigma\cdot x}\ge \frac{1}{2}\,|a_\alpha|\,e^{(\sigma-\alpha)|x|}\,,
$$
and for sufficiently large positive $x$
$$
\big|f(e^x)\big|\,e^{-\sigma\cdot x}\ge \frac{1}{2}\,|a_\beta|\,e^{(\beta-\sigma)|x|}\,.
$$
Now make the induction hypothesis that the inequality (\ref{olik}) holds for dimensions $\le n-1$, and consider a polynomial $f$ of $n$ variables.
For each face $\Gamma$ of $\Delta_f$, with $0\le \dim\Gamma\le n-1$, the given point $\sigma$ can be expressed as a convex combination
$$
\sigma=\lambda\,\sigma_\Gamma+(1-\lambda)\tau_\Gamma\,,
$$
where $\sigma_\Gamma\in\text{\rm relint}(\Gamma)$ and $\tau_\Gamma\in\text{\rm relint}\bigl(\text{\rm conv}(A\setminus \Gamma)\bigr)$.
Fix a choice of such a point $\sigma_\Gamma$ in each face $\Gamma$, and consider for each $\Gamma$  the new convex polytope
$$
\Delta_\Gamma=\text{\rm conv}\bigl((A\setminus\Gamma)\cup\sigma_\Gamma\bigr)\,.$$
Notice that when $\dim\Gamma=0$, that is, when $\Gamma$ is a vertex of $\Delta_f$, one has $\Delta_\Gamma=\Delta_f$. Notice also that the original point $\sigma$ belongs to each $\Delta_\Gamma$.
\smallskip

Let $\widetilde C_\Gamma$ be the outer normal cone to $\Delta_\Gamma$ with vertex at $\sigma_\Gamma$:
\be\label{ineq}
\widetilde C_\Gamma=\bigl\{\,x\in\mathbb{R}^n\,\,|\,\ \langle\xi-\sigma_\Gamma,x-\sigma_\Gamma\rangle\le0\,,\  \forall\,\xi\in\Delta_\Gamma\,\bigr\}\,.
\ee
All these cones $\widetilde C_\Gamma$ are of full dimension $n$ and together they almost cover the entire space $\mathbb{R}^n$. More precisely, the complement
$$
\mathbb{R}^n\setminus\Bigl(\bigcup_\Gamma\widetilde C_\Gamma\Bigr)
$$
is a bounded subset of $\mathbb{R}^n$. Then one can let $C_\Gamma$ be a slightly smaller closed convex cone, still with vertex at $\sigma_\Gamma$, such that $C_\Gamma\setminus\sigma_\Gamma$ is contained in the interior of $\widetilde C_\Gamma$, and such that the complement of the union $\cup_\Gamma C_\Gamma$ is still a bounded set. 
Notice that for $x\in C_\Gamma\setminus\sigma_\Gamma$ the inequality in \eqref{ineq} will be strict, and we may in fact assume this to be true uniformly.
\smallskip

We now observe that it is enough to prove the estimate (\ref{olik}) for $x\in C_\Gamma$. Actually, it suffices to do it for 
$x\in C_\Gamma\setminus B_R(0)$ for some large ball $B_R(0)$. From the induction hypothesis we conclude that there are constants $c_\Gamma$ such that
$$
\Big|f_\Gamma(e^x)e^{-\langle\sigma_\Gamma,x\rangle}\Big|\ge c_\Gamma>0\,,\quad x\in\mathbb{R}^n\,.
$$
Indeed, $f_\Gamma(e^x)$ is a function depending on fewer variables than $n$, since it is homogeneous in directions orthogonal to $\Gamma$, and $\sigma_\Gamma\in\text{\rm relint}(\Delta_{f_\Gamma})$.
\smallskip

For each face $\Gamma$ let $g_\Gamma(z)$ be the function containing all the monomials not on $\Gamma$ so that $f_\Gamma+g_\Gamma=f$.
Now we use the decomposition $f=f_\Gamma +g_\Gamma$ so that one obtains
\begin{equation}\label{p1}
f(e^x)e^{-\langle\sigma,x\rangle}=e^{\langle\sigma_\Gamma-\sigma,x\rangle}\bigl(f_\Gamma(e^x)e^{-\langle\sigma_\Gamma,x\rangle}+g_\Gamma(e^x)e^{-\langle\sigma_\Gamma,x\rangle}\bigr)\,.
\end{equation}
Take $x\in C_\Gamma$ and write $x=\sigma_\Gamma+y$.  Recall that $\sigma\in\Delta_f$. The first factor $e^{\langle\sigma_\Gamma-\sigma,x\rangle}$ can be estimated from below by $c_0e^{k|y|}$ with the positive constants $c_0$ and $k$ given by $c_0=\exp\langle\sigma_\Gamma-\sigma,\sigma_\Gamma\rangle$, and 
$$
k=\min\{\langle\sigma_\Gamma-\sigma,y\rangle\,;\ |y|=1\,,\ \sigma_\Gamma+y\in C_\Gamma\,\}>0\,.
$$ Assuming, which we may, that $|x|>|\sigma_\Gamma|$, and hence that $|x|-|\sigma_\Gamma|\geq|x-\sigma_\Gamma|=y$, we find
$$
e^{\langle\sigma_\Gamma-\sigma,x\rangle}\ge c_1e^{k|x|}\,,
$$
where $c_1=c_0e^{-k|\sigma_\Gamma|}$. 
\smallskip

To finish the proof of the inequality we now only need to bound the expression in brackets in (\ref{p1}) from  below by a positive constant. From the induction hypothesis we have that $$|f_\Gamma e^{-\langle\sigma_\Gamma,x\rangle}|\geq c_\Gamma>0,$$ and it is therefore enough to show that the remainder term $g_\Gamma(e^x) e^{-\langle\sigma_\Gamma,x\rangle}$ stays small, say $<c_\Gamma/2$.
We have the identity
$$
g_\Gamma(e^x)=\sum_{\alpha\in A\setminus\Gamma}a_\alpha\,e^{\langle\alpha,x\rangle}\sum_{\alpha\in A\setminus\Gamma}\tilde a_\alpha\,e^{\langle\alpha,y\rangle}\,.
$$
Since $\alpha\in\Delta_\Gamma$ we have a strictly positive constant
$$
k_\alpha=\min\bigl\{\,\langle\sigma_\Gamma-\alpha,y\rangle\,;\ |y|=1\,,\ \sigma_\Gamma+y\in C_\Gamma\,\bigr\}\,,
$$
and hence
$$
\big|a_\alpha\,e^{\langle\alpha,x\rangle}\big|=\big|\tilde a_\alpha\,e^{\langle\sigma_\Gamma-\alpha,y\rangle}\big|\le|\tilde a_\alpha|\,e^{-k_\alpha\,|y|}\,.
$$ This means that for some large enough $R_0$ one has
$$
\big|g_\Gamma(e^x)\,e^{-\langle\sigma_\Gamma,x\rangle}\big|<c_\Gamma/2\,,\quad\text{\rm whenever}\quad |\sigma_\Gamma+x|\ge R_0\,,\ x\in C_\Gamma\,.
$$
Hence there is an inequality $|f(e^x)\exp\,(-\langle\sigma_\Gamma,x\rangle)|\ge c_\Gamma/2$, and we can conclude that for all $x$ in 
$C_\Gamma\setminus B_R(0)$, for some large ball $B_R(0)$, one has the desired estimate
$$
\Big|f(e^x)e^{-\langle\sigma,x\rangle}\Big|\ge c\,e^{k|x|}\,,\quad x\in\mathbb{R}^n\,,
$$
with $c=c_1c_\Gamma/2$.
\end{proof}
\bigskip

Having thus established the convergence of the integral (\ref{mellint}) defining the Mellin transform, we now turn to the question of finding its analytic
continuation as a meromorphic function of $s$ in the whole complex space $\mathbb{C}^n$. The polar locus of the meromorphic continuation
turns out to be a finite union of families of parallel hyperplanes. The normal directions of these hyperplanes are precisely the vectors $\mu_k$ from the 
representation (\ref{newtonpol}) of the Newton polytope $\Delta_f$.
\smallskip

\begin{thm}\label{mel}
If the polynomial $f$ is completely non-vanishing on the positive orthant $\mathbb{R}^n_+$ and its Newton polytope $\Delta_f$ is of full dimension,
then the Mellin transform $M_{1/f}$ admits a meromorphic continuation 
of the form
\begin{equation}\label{mero}M_{1/f}(s)=\Phi(s)\prod_{k=1}^N\Gamma(\langle\mu_k,s\rangle-\nu_k),\end{equation}
where $\Phi$ is an entire function, and where $\mu_k,\nu_k$ are the same as in equation (\ref{newtonpol}).

\end{thm}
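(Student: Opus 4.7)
\ The strategy is to peel off one $\Gamma$-factor per facet of $\Delta_f$ by iterating an integration-by-parts identity. For each facet $\Gamma_k$ introduce the Euler operator along the inward normal $\mu_k$,
\begin{equation*}
L_k := \sum_{j=1}^n (\mu_k)_j\, z_j\,\partial_{z_j}\,, \qquad L_k(z^\alpha) = \langle\mu_k,\alpha\rangle\, z^\alpha.
\end{equation*}
Since $\langle\mu_k,\alpha\rangle \ge \nu_k$ for every $\alpha\in A$, with equality exactly on the facet $\Gamma_k$, we obtain $L_k f = \nu_k f + h_k$, where
\begin{equation*}
h_k(z) := \sum_{\alpha\in A\setminus\Gamma_k}\bigl(\langle\mu_k,\alpha\rangle - \nu_k\bigr)\,a_\alpha\,z^\alpha
\end{equation*}
has Newton polytope strictly interior to $\Delta_f$ on the $\Gamma_k$-side. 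Consequently $L_k(1/f) = -\nu_k/f - h_k/f^2$. Integrating $L_k(z^s)\cdot(1/f)$ by parts on $\mathbb{R}_+^n$ against the multiplicatively invariant measure $dz_1\cdots dz_n/(z_1\cdots z_n)$---the boundary terms vanish for $\sigma\in\text{int}\,\Delta_f$ by the exponential decay of Theorem~\ref{expgro}---produces the central functional equation
\begin{equation*}
\bigl(\langle\mu_k,s\rangle - \nu_k\bigr)\,M_{1/f}(s) = \int_{\mathbb{R}_+^n} z^s\,\frac{h_k(z)}{f(z)^2}\,\frac{dz_1\cdots dz_n}{z_1\cdots z_n}.
\end{equation*}

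The right-hand side is a Mellin transform of the same general type, and because $h_k$ carries no monomials on $\Gamma_k$ its convergence tube extends $\text{int}\,\Delta_f$ by one integer step past the hyperplane $\langle\mu_k,\sigma\rangle = \nu_k$. Dividing by the linear factor therefore continues $M_{1/f}$ across that hyperplane with at most a simple pole on it. Iterating---applying $L_k$ next to $h_k/f^2$, then to the resulting rational function, and so on---peels off successive simple poles at $\langle\mu_k,s\rangle = \nu_k-1,\,\nu_k-2,\ldots$, reproducing exactly the polar lattice of $\Gamma(\langle\mu_k,s\rangle - \nu_k)$. Since the operators $L_k$ pairwise commute, these iterations can be performed independently for each of the $N$ facets, yielding the full product $\prod_{k=1}^N \Gamma(\langle\mu_k,s\rangle - \nu_k)$.

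Setting $\Phi(s) := M_{1/f}(s)\,\big/\prod_{k=1}^N \Gamma(\langle\mu_k,s\rangle-\nu_k)$ produces a meromorphic function whose only possible poles lie on the hyperplanes $\langle\mu_k,s\rangle = \nu_k - j$, $j \ge 0$. The main obstacle is showing that $\Phi$ is actually \emph{entire}: the residues of $M_{1/f}$ on each polar hyperplane must be cancelled exactly by those of the Gamma product, and in particular no spurious singularities may persist along intersections of polar hyperplanes coming from distinct facets. I would handle this by induction on $n = \dim\Delta_f$: the residue of $M_{1/f}$ on $\langle\mu_k,s\rangle = \nu_k$ should reduce, via the functional equation together with a Fubini-type argument that integrates out the $\mu_k$-direction first, to a Mellin transform of $1/f_{\Gamma_k}$ on the torus dual to $\Gamma_k$. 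Since $f$ is completely non-vanishing on $\mathbb{R}_+^n$, the truncation $f_{\Gamma_k}$ inherits the same property on the appropriate lower-dimensional orthant and satisfies the inductive hypothesis; propagating the matching of $\Gamma$-residues through the dimensions then gives entireness of $\Phi$.
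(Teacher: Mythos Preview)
Your derivation of the single-step functional equation is correct and is exactly what the paper does; your Euler operator $L_k$ is the infinitesimal generator of the paper's dilation $z\mapsto\lambda^{\mu_k}z$, so the identity $(\langle\mu_k,s\rangle-\nu_k)M_{1/f}(s)=\int z^s h_k/f^2\,dz/z$ coincides with the paper's formula obtained by differentiating in $\lambda$ and setting $\lambda=1$.

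The divergence from the paper, and the real gap in your proposal, is in the iteration. You invoke commutativity of the $L_k$ to say the iterations ``can be performed independently for each of the $N$ facets,'' but commutativity of differential operators does not by itself tell you that the resulting integral converges on a large enough domain. After one step you have $\int z^s h_k/f^2\,dz/z$; after mixed iterations you get $\int z^s g_m/f^{1+|m|}\,dz/z$ for some polynomial $g_m$, and the convergence domain of this integral is governed by the Newton polytope of $g_m$ relative to $(1+|m|)\Delta_f$. The paper handles this by an explicit induction on $m\in\mathbb{N}^N$, proving the inclusion $\Delta_{g_m}\subseteq\Delta(|m|\nu+m)$ at each step and then checking that this forces convergence on the enlarged polytope $\Delta(\nu-m)=\bigcap_k\{\langle\mu_k,\sigma\rangle\ge\nu_k-m_k\}$. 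That polytope bookkeeping is the substance of the proof, and it is absent from your sketch.

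Once that bookkeeping is in place, entireness of $\Phi$ is immediate: on $\Delta(\nu-m)+i\mathbb{R}^n$ the function $M_{1/f}\cdot\prod_j u_j(s)$ is holomorphic, so the only poles of $M_{1/f}$ there are the (simple) zeros of $\prod_j u_j$, which match exactly the poles of $\prod_k\Gamma(\langle\mu_k,s\rangle-\nu_k)$; the removable-singularities theorem finishes. Your proposed dimension induction via residues and truncated polynomials $f_{\Gamma_k}$ is therefore unnecessary, and as you note yourself it would be delicate at intersections of polar families. The paper's route sidesteps that difficulty entirely by never computing a single residue.
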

\smallskip

\noindent Before giving the proof of this theorem let us illustrate the idea of the argument by means
of a specific example.
\bigskip

\noindent\textbf{Example 2.}\ Consider the polynomial $f(z)=1+z_2+z_1^2+z_1z_2^2$. It is easy to check that the
representation (\ref{newtonpol}) of its Newton polytope is given by 
$$\{\sigma_1\geq 0\}\cap\{\sigma_1-\sigma_2\geq -1\}\cap\{-2\sigma_1-\sigma_2\geq -4\}\cap\{\sigma_2\geq 0\}\,,$$
so in this case the Newton polygon $\Delta_f$ has four inward normal vectors given by 
$$\mu_1=(1,0)\,,\quad\mu_2=(1,-1)\,,\quad\mu_3=(-2,-1)\,,\quad \text{and}\quad \mu_4=(0,1)\,.$$

\noindent We know from Theorem~1 that the Mellin transform $M_{1/f}$ is holomorphic for all $s=(s_1,s_2)$ whose real part 
$\sigma=(\sigma_1,\sigma_2)$ lies inside the Newton polygon $\Delta_f$. In order to achieve a meromorphic continuation of
$M_{1/f}$ across the left vertical edge of $\Delta_f$ it suffices to perform an integration by parts with respect to $z_1$.
Indeed, this gives us the identity

\begin{equation}\label{exm}M_{1/f}(s)=\frac{1}{s_1}\!\int_0^\infty\!\!\!\!\int_0^\infty\!\!\frac{(2z_1^2+z_1z_2^2)\,z_1^{s_1}z_2^{s_2}}{(1+z_2+z_1^2+z_1z_2^2)^2}\frac{dz_1dz_2}{z_1z_2}\,,\end{equation}
\smallskip

\noindent and we claim that this integral, that is, the Mellin transform multiplied by $s_1$, converges for all $s$ with real part $\sigma$
in the dark triangle on the left in Figure~2. This means that $M_{1/f}$ has been continued meromorphically over the hyperplane $s_1=0$ as desired.
To verify the claim we decompose the integral in (\ref{exm}) into two Mellin type integrals containing the integrands $2z_1^{2+s_1}z_2^{s_2}/f^2$ and 
$z_1^{1+s_1}z_2^{2+s_2}/f^2$
respectively. Since the Newton polygon of the denominator $f^2$ is equal to the original $\Delta_f$ dilated by a factor $2$, we see that the convergence
domains for these two integrals are given by the translated polygons $(-2,0)+2\Delta_f$ and $(-1,-2)+2\Delta_f$ respectively. The sum of the integrals therefore converges
on the intersection of the translated polygons, and this is precisely the dark triangle on the left in Figure~2.

\vspace{-1cm}
\begin{figure}[H]
\centering
\hspace{1cm}
\includegraphics[height=5cm]{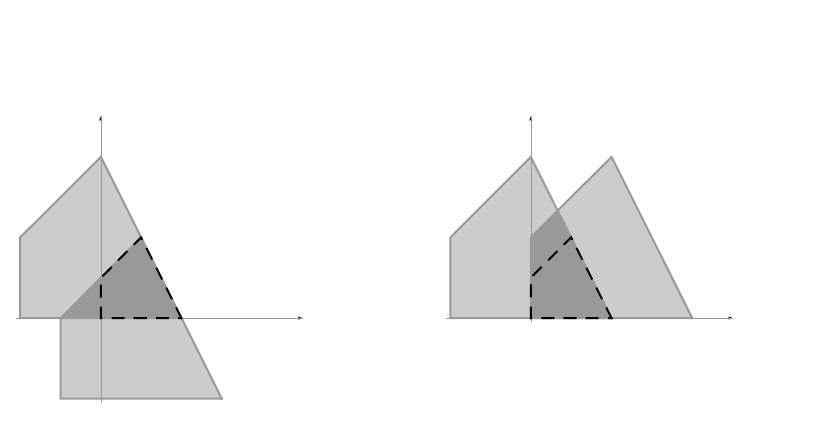}
\caption{The convergence domains (dark) of the integrals after the two cases of integration by parts, given as the intersection of two translated copies of $\Delta_{f^2}
=2\Delta_f$. The dashed polygon is $\Delta_f$.}
\end{figure}

\vspace{-.2cm}
We have thus seen how a meromorphic continuation can be carried out in the horizontal direction, that is, in the direction given by $\mu_1$. Suppose next that we 
wish to obtain a similar mermorphic extension across the upper left edge of $\Delta_f$, the one with normal vector $\mu_2=(1,-1)$. The way to acheive such a ``directional integration by parts" is to suitably introduce a parameter $\lambda$ and then to differentate with respect to $\lambda$. More precisely, we make the coordinate change
$z_1\mapsto\lambda z_1$, $z_2\mapsto\lambda^{-1}z_2$ and obtain 

$$M_{1/f}(s)=\lambda^{1+s_1-s_2}\int_0^\infty\!\!\!\!\int_0^\infty\frac{z_1^{s_1}z_2^{s_2}}{\lambda+z_2+\lambda^3z_1^2+z_1z_2^2}\frac{dz_1dz_2}{z_1z_2}\,.$$ 
\smallskip

\noindent Here the left hand side is obviously independent of $\lambda$. Hence so is the right hand side, and after differentiating and plugging in $\lambda=1$ we
find that

$$0=(1+s_1-s_2)\!\!\int_0^\infty\!\!\!\!\int_0^\infty\frac{z_1^{s_1}z_2^{s_2}}{f(z)}\frac{dz_1dz_2}{z_1z_2}\,-\!\int_0^\infty\!\!\!\!\int_0^\infty\frac{(1+3z_1^2)\,z_1^{s_1}z_2^{s_2}}{f(z)^2}\frac{dz_1dz_2}{z_1z_2}.$$
\smallskip

\noindent This relation can be re-written as

$$M_{1/f}(s)=\frac{1}{1+s_1-s_2}\int_0^\infty\!\!\!\!\int_0^\infty\frac{(1+3z_1^2)\,z_1^{s_1}z_2^{s_2}}{f(z)^2}\frac{dz_1dz_2}{z_1z_2}\,,$$
\smallskip

\noindent and reasoning as above we find that this latter integral converges for all $s$ with real part $\sigma$ in the dark polygon on the right
in Figure~2, thereby yielding a meromorphic continuation across the hyperplane $s_1-s_2=-1$. 
\smallskip

This method of repeatedly performing integration by parts in all the directions $\mu_k$, by using the corresponding coordinate changes $z_j\mapsto\lambda^{\mu_{kj}}z_j$,
is the basis for our proof of Theorem~2, and it gives a global meromorphic continuation of the original Mellin integral. For our special example, the picture below indicates the 
full set of polar hyperplanes, going out in all directions from the Newton polytope $\Delta_f$.

\begin{figure}[H]
\centering
\includegraphics[height=5cm]{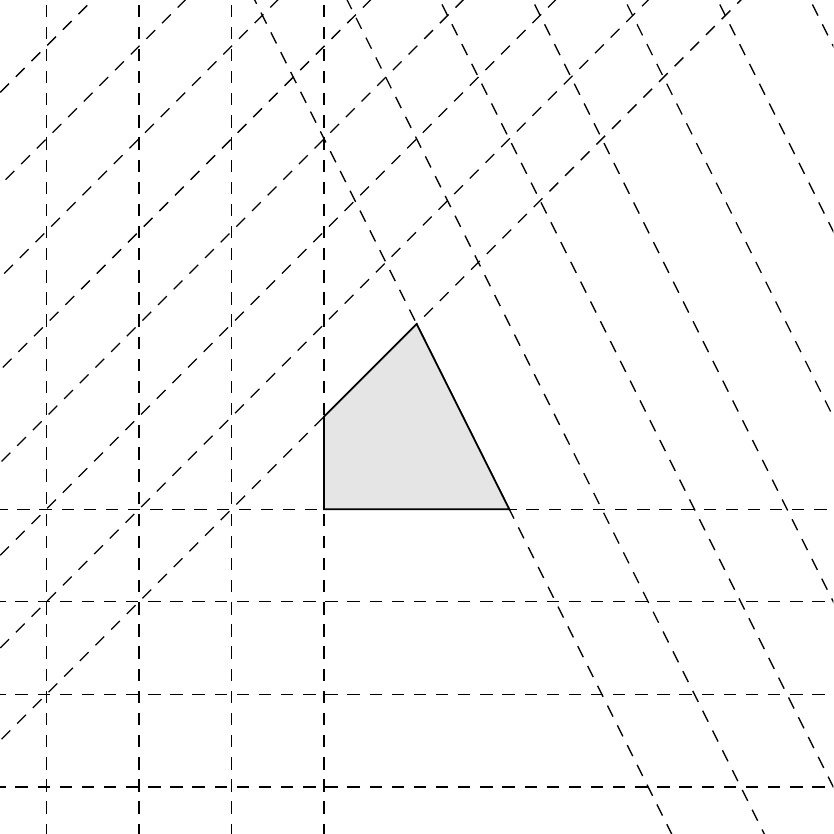}
\vskip.5cm
\caption{ Polar hyperplanes of the Mellin transform $M_{1/f}$. }
\end{figure}

\noindent{\bf Remark.}\ For the Mellin transform of a general rational function $g/f$ each monomial in the numerator $g$ produces an integral similar to the one in the theorem, except that we get a shift in the variable $s$ by an integer vector. This corresponds to a translation of the Newton polytope of $f$, and hence also of the domain of convergence of that particular integral. If $g$ has several monomials it can very well happen that the intersection of all the corresponding shifted polytopes is empty. In that case the integral 
defining the Mellin tranform may not actually converge for any values of $s$. Nevertheless, performing the meromorphic continuation of each of the integrals associated with the monomials from $g$ and then summing these meromorphic functions, we still obtain a natural interpretation of the Mellin transform $M_{g/f}$ as a meromorphic fucntion in the entire $s$-space.
\smallskip

\begin{proof} We prove that the integral (\ref{mellint}) can be re-written in such a way as to make it have a larger convergence domain, at the expense of having to multiply the integral with reciprocals of linear terms corresponding to the poles of the gamma functions.
In order to achieve this we shall repeatedly ``integrate by parts" in each of the directions given by the vectors $\mu_k$. Each such step consists in first making the corresponding dilation $(z_1, \ldots, z_n) \mapsto(\lambda^{\mu_{k1}}z_1,\ldots, \lambda^{\mu_{kn}}z_n)$ of the coordinates, then differentiating with respect to the dilation parameter 
$\lambda$, and finally setting $\lambda$ equal to $1$. 
\smallskip

Note that, if $\Gamma$ is the facet of $\Delta_f$ with inward normal vector $\mu_k$, the truncated polynomial $f_\Gamma$ has the homogeneity $f_\Gamma(\lambda^{\mu_k}z )=\lambda^{\nu_k}f_\Gamma(z)$. Hence, the scaled polynomial $\lambda^{-\nu_k}f(\lambda^{\mu_k}z)$ has the property that all its monomials with exponents from $\Gamma$ have coefficients that are independent of the parameter $\lambda$. This means that in the differentiated polynomial 
$$g_{e_k}(z)=\frac{d}{d\lambda}\Bigl(\lambda^{-\nu_k}f(\lambda^{\mu_k}z)\Bigr)\bigg|_{\lambda=1}$$
\smallskip

\noindent there are no monomials with exponents from the facet $\Gamma$. Its Newton polytope is therefore strictly smaller than $\Delta_f$, with the integer $\nu_k$ from the original inequality $\langle\mu_k,\sigma\rangle\ge\nu_k$ being replaced by $\nu_k+1$, or possibly by an even larger integer.
\smallskip

Starting from the original integral expression (\ref{mellint}) for the Mellin transform $M_{1/f}$, introducing the parameter $\lambda$, and keeping in mind that $M_{1/f}$ itself is of course independent of $\lambda$, we obtain
\smallskip

$$ 0=\frac{d}{d\lambda}\int_{\mathbb{R}_+^n}\frac{(\lambda^{\mu_k}z)^{s}}{{f(\lambda^{\mu_k}z)}}\frac{dz}{z}=\frac{d}{d\lambda}\Bigl[\lambda^{\langle{\mu_k},s\rangle-\nu_k}\int_{\mathbb{R}_+^n}\frac{z^{s}}{{\lambda^{-\nu_k}f(\lambda^{\mu_k}z)}}\frac{dz}{z}\Bigr]\,,$$
\smallskip

\noindent which upon performing the differentation and setting $\lambda=1$ yields the identity
\smallskip

\begin{equation}\label{ett}
\bigl(\langle\mu_k,s\rangle-\nu_k\bigr)\,M_{1/f}(s)=\int_{\mathbb{R}_+^n}\frac{z^{s}g_{e_k}(z)}{f(z)^2}\frac{dz}{z}\,.
\end{equation}
\smallskip

As we shall iterate this procedure it will be important to keep track of polytopes of different sizes, and to this end we
introduce, for any vector $\gamma\in\mathbb{Z}^n$, the notation

$$\Delta(\gamma)=\bigcap_{k=1}^N\bigl\{\sigma\in\mathbb{R}^n\,;\,\langle\mu_k,\sigma\rangle\geq\gamma_k\bigr\}\,.$$
\smallskip

\noindent In particular, we have $\Delta_f=\Delta(\nu)$. Now let $m\in\mathbb{N}^N$ be a given vector, and perform the integration by parts $m_j$ times in the direction of $\mu_j$, for each $j=1,\ldots,N$. The total number of such integrations will thus be $|m|=m_1+\ldots+m_N$. We claim that this iterative process leads to an expression for the Mellin transform that is of the form  

\begin{equation}\label{int2}M_{1/f}(s)=\frac{1}{\prod_{j=1}^N u_j(s)}\int_{\mathbb{R}^n_+}\frac{z^{s}\,g_m(z)}{f(z)^{1+|m|}}\frac{dz}{z},\end{equation} 
\smallskip

\noindent where $g_m$ is a polynomial whose Newton polytope satisfies
$\Delta_{g_m}\!\subseteq\,\Delta(|m|\nu+m)$ and $u_j(s)=\prod_{\ell=0}^{m_j-1}\bigl(\langle\mu_j,s\rangle-\nu_j+\ell\bigr)$, with the convention $u_j=1$ if $m_j=0$.
\smallskip

The proof of the claim is  by induction. First we check that it holds true in the case $|m|=1$, that is, when $m$ is a standard unit vector $e_k$ with $1$ in the $k$'th entry 
and zeros elsewhere. Indeed, this is precisely the content of formula (\ref{ett}), where we recall that the Newton polytope of $g_{e_k}$ is contained in $\Delta(\nu+e_k)$.
\smallskip

Assume now the claim to be true for some given vector $m$, and let us show that it then holds also for $m'=m+e_k$, where $e_k$ is a unit vector as before.
Introducing again the dilated coordinates $\lambda^{\mu_k} z$, we can re-write the integral in equation (\ref{int2}) as

$$\lambda^{\langle\mu_k,s\rangle-\nu_k+m_k}\int_{\mathbb{R}_+^n}\frac{z^s\,\lambda^{-|m|\nu_k-m_k}g_m(\lambda^{\mu_k} z)}{\lambda^{-(1+|m|)\nu_k}
f(\lambda^{\mu_k} z)^{1+|m|}}\frac{dz}{z}\,.$$
\smallskip

\noindent We should then differentiate this expression with respect to $\lambda$ and put $\lambda=1$. When the derivative falls on the monomial in front of the integral we get a factor $\langle\mu_k,s\rangle-\nu_k+m_k$ which is precisely what needs to be incorporated into the function $u_k$, and when we differentiate under the sign of integration we arrive at an expression of the form
$$-\int_{\mathbb{R}_+^n}\frac{z^s\,g_{m'}(z)}{f(z)^{2+|m|}}\frac{dz}{z}\,.$$
\smallskip

\noindent The new polynomial in the numerator is $g_{m'}(z)=(1+|m|)g_{e_k}(z)g_m(z)-f(z)\tilde g_m(z)$, where
$$\tilde g_m(z)=\frac{d}{d\lambda}\Bigl(\lambda^{-|m|\nu_k-m_k}g_m(\lambda^{\mu_k} z)\Bigr)\bigg|_{\lambda=1}\,.$$
\smallskip

\noindent To finish the proof of the claim we must show that $\Delta_{g_{m'}}\!\subseteq\,\Delta(|m'|\nu+m')$. We shall use the fact that
the Newton polytope of a product of two polynomials is equal to the (Minkowski) sum of their Newton polytopes, and also the obvious general inclusion
$\Delta(\gamma)+\Delta(\delta)\subseteq\Delta(\gamma+\delta)$. Recalling the induction hypothesis, we first see that the Newton polytope of the product 
$g_{e_k}g_m$ is contained in the polytope
$\Delta(\nu+e_k)+\Delta(|m|\nu+m)\subseteq\Delta((1+|m|)\nu+m+e_k)=\Delta(|m'|\nu+m')$. Then, since the polynomial $\tilde g_m$ has no monomials
with exponents on the plane $\langle\mu_k,\sigma\rangle=|m|\nu_k+m_k$, we similarly get that the Newton polytope of the other term $f\tilde g_m$ is contained in
$\Delta(\nu)+\Delta(|m|\nu+m+e_k)\subseteq\Delta(|m'|\nu+m')$. From this the claim follows, that is, the Mellin transform is given by (\ref{int2}) with $g_m$ 
satisfying $\Delta_{g_m}\!\subseteq\,\Delta(|m|\nu+m)$.
\smallskip

Our next step is to prove that the integral in (\ref{int2}) converges and defines an analytic function for all $s$ with real parts $\sigma$ in the enlarged polytope $\Delta(\nu-m)$.
By considering separately each term of $g_m$, we can infer from Theorem~1 that the domain of convergence will contain (the interior of) the intersection
\begin{equation}\label{snitt}\bigcap_{\tau\in \Delta_{g_m}}[(1+|m|)\Delta_f-\tau]\end{equation}
of translates of dilated copies of $\Delta_f$.
\smallskip

Let us check that $\Delta(\nu-m)$ is indeed a subset of (\ref{snitt}). Take an arbitrary $\sigma_0\in\Delta(\nu-m)$. By definition it satisfies the inequalities
\begin{equation}\label{olikhet}\langle\mu_k,\sigma_0\rangle\ge\nu_k-m_k\,,\qquad   k=1,\ldots,N\,.\end{equation}
In order to see that $\sigma_0$ also belongs to the intersection (\ref{snitt}), take any $\tau\in\Delta_{g_m}$ and observe that the polytope $(1+|m|)\Delta_f-\tau$ is given by the inequalities
\begin{equation}\label{olik1}\langle\mu_k,\sigma+\tau\rangle\ge(1+|m|)\nu_k\,,\qquad   k=1,\ldots,N\,.\end{equation}
What we have to show is that $\sigma_0$ satisfies these inequalities. In view of the inclusion $\Delta_{g_m}\!\subseteq\,\Delta(|m|\nu+m)$, we have
$\langle\mu_k,\tau\rangle\ge|m|\nu_k+m_k$ for all $k$. Together with (\ref{olikhet}) this gives
$$\langle\mu_k,\sigma_0+\tau\rangle=\langle\mu_k,\sigma_0\rangle+\langle\mu_k,\tau\rangle\ge\nu_k-m_k+|m|\nu_k+m_k=(1+|m|)\nu_k\,,$$
so $\sigma_0$ does indeed satisfy (\ref{olik1}), and since $\tau$ was arbitrary it follows that $\sigma_0$ lies in the intersection (\ref{snitt}).
\smallskip

In the interior of the domain $\Delta(\nu-m)+i\,\mathbb{R}^n$ the only poles of $M_{1/f}$ are given by $u_j(s)=0$, $j=1,\ldots,N$. All these poles are simple. This is the same polar locus as for the product $\prod_k\Gamma(\langle\mu_k,s\rangle-\nu_k)$. By the theorem on removable singularities it follows that the quotient $M_{1/f}/\prod_k\Gamma(\langle\mu_k,s\rangle-\nu_k)=\Phi$ is holomorphic for $\sigma$ inside the polytope $\Delta(\nu-m)$. But here $m\in\mathbb{N}^N$ is arbitrary, and since the union of all the $\Delta(\nu-m)$ is the entire space $\mathbb{R}^n$, we conclude that $\Phi$ is in fact an entire function as claimed in the theorem. \end{proof}
\bigskip

\section{Two special cases}

\noindent In certain situations we are able to make our description of the Mellin transform even more precise, and explicitly compute the entire function $\Phi$ that occurs in front of the gamma factors in Theorem~2. We have already encountered such a case in Example~1 of the introduction, where we considered the transform of the simple fraction $1/(1+z_1+z_2)$. Elaborating this example just a little further, and considering a more general linear fraction $1/(c_0+c_1z_1+\ldots+c_nz_n)$ with each coefficient $c_k$ being a positive real number, one easily deduces the formula
\begin{equation}\label{fraction}M_{1/f}(s)=c_0^{s_1+\ldots+s_n-1}c_1^{-s_1}\cdots c_n^{-s_n}\,\Gamma(s_1)\cdots\Gamma(s_n)\Gamma(1-s_1-\ldots-s_n)\,.\end{equation}
So in this case the entire function $\Phi$ is equal to the elementary exponential function $s\mapsto c_0^{s_1+\ldots+s_n-1}c_1^{-s_1}\cdots c_n^{-s_n}$ and in particular different from zero everywhere.
\smallskip

We shall now consider two families of examples that both generalize the case of a linear fraction, namely products of linear fractions and rational functions that are obtained from linear fractions by means of a monomial change of variables.
\smallskip

\begin{pro}\label{mellin} Assume that the polynomial $f(z)=\prod_{k=0}^m\bigl(1+\langle a_k,z\rangle\bigr)$ is a product of affine linear 
factors, with each $a_k\in\mathbb{R}^n_+.$ Then the Mellin transform of the rational function $1/f$ is equal to
\be\label{f} M_{1/f}(s)=\Phi(s)\Gamma(s_1)\ldots\Gamma(s_n)\Gamma(m+1-s_1-\ldots -s_n)\,,\ee
with the entire function $\Phi$ given by
$$\Phi(s)=\int_{\sigma_m}\frac{d\tau_1\cdots d\tau_m}{\alpha_1(\tau)^{s_1}\cdots\alpha_n(\tau)^{s_n}}\,.$$ 
Here $\sigma_m$ denotes the standard $m$-simplex $\bigl\{\,\tau\in\mathbb{R}^m_+\,;\ \sum\tau_k<1\,\bigr\}$, and 
the $\alpha_k(\tau)$ are affine linear forms defined by 
$$\bigl(\alpha_1(\tau),\ldots,\alpha_n(\tau)\bigr)=\bigl(1-\text{$\sum$}\tau_k\bigr)a_0+\tau_1a_1+\ldots+\tau_m a_m\,.$$
 
\end{pro}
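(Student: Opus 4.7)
The plan is to reduce the proposition to a single-denominator Mellin integral by Feynman parametrization, then to a classical Dirichlet integral by a diagonal rescaling. Specifically, I would begin with the identity
$$\frac{1}{A_0 A_1\cdots A_m}=m!\int_{\sigma_m}\frac{d\tau_1\cdots d\tau_m}{\bigl[(1-\sum\tau_k)A_0+\tau_1 A_1+\ldots+\tau_m A_m\bigr]^{m+1}},$$
applied with $A_k=1+\langle a_k,z\rangle$. Since the weights $(1-\sum\tau_k,\tau_1,\ldots,\tau_m)$ form a partition of unity on the closed simplex, the convex combination of the $A_k$'s equals $1+\sum_j \alpha_j(\tau) z_j$, where the $\alpha_j$ are exactly the affine linear forms in the statement. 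Inserting this into the defining integral and interchanging the order of integration yields
$$M_{1/f}(s)=m!\int_{\sigma_m}\!d\tau\int_{\mathbb{R}_+^n}\frac{z^{s}}{(1+\sum_j \alpha_j(\tau)z_j)^{m+1}}\frac{dz_1\cdots dz_n}{z_1\cdots z_n}.$$

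Next I would make the diagonal substitution $w_j=\alpha_j(\tau)z_j$ inside the inner integral, which factors out $\prod_j\alpha_j(\tau)^{-s_j}$ and reduces what remains to the standard Dirichlet integral
$$\int_{\mathbb{R}_+^n}\frac{w_1^{s_1-1}\cdots w_n^{s_n-1}}{(1+w_1+\ldots+w_n)^{m+1}}\,dw=\frac{\Gamma(s_1)\cdots\Gamma(s_n)\Gamma(m+1-s_1-\ldots-s_n)}{\Gamma(m+1)}.$$
This identity is the higher-dimensional analogue of Example~1 and can be verified by the same method used there: insert $1=\int_0^\infty v^{m}e^{-v(1+\sum w_j)}dv/\Gamma(m+1)$, apply Fubini, and perform the change $u_j=vw_j$, $u_0=v$. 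The factor $\Gamma(m+1)=m!$ then cancels the Feynman prefactor, delivering exactly the expression (\ref{f}) with
$$\Phi(s)=\int_{\sigma_m}\frac{d\tau_1\cdots d\tau_m}{\alpha_1(\tau)^{s_1}\cdots\alpha_n(\tau)^{s_n}}.$$

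It remains to check that $\Phi$ is entire. Because every $a_k$ has strictly positive coordinates and the simplex coefficients are non-negative with sum one, each $\alpha_j(\tau)\geq\min_k a_{k,j}>0$ on the compact closure of $\sigma_m$. Hence $\alpha_j(\tau)^{-s_j}=\exp(-s_j\log\alpha_j(\tau))$ is entire in $s$ with locally uniform bounds in $\tau$, and differentiation under the integral sign shows that $\Phi$ is entire on $\mathbb{C}^n$. The main technical point I expect to dwell on is justifying the interchange of integrations in the first step: we need absolute convergence of the double integral in some non-empty tube. Choosing $s$ with $\sigma$ in the interior of $\Delta_f=\{\sigma\in\mathbb{R}_+^n\,;\ \sum\sigma_j<m+1\}$, the moduli give a positive integrand and Theorem~\ref{expgro} guarantees convergence of the inner $z$-integral (uniformly in $\tau$, thanks to the positive lower bound on the $\alpha_j$), so Tonelli applies and the exchange is legitimate. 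The equality of meromorphic functions on all of $\mathbb{C}^n$ then follows by analytic continuation from this tube.
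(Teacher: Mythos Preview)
Your proof is correct and follows essentially the same route as the paper: both use the Feynman/Fantappi\`e partial-fraction identity to reduce to the Mellin transform of $1/(1+\langle c,z\rangle)^{m+1}$, then evaluate that transform explicitly. The only cosmetic difference is that the paper obtains the latter by differentiating the linear-fraction formula~(\ref{fraction}) $m$ times in a parameter $\lambda$, whereas you rescale diagonally and invoke the Dirichlet integral via the exponential trick of Example~1; your version also spells out the Fubini justification and the entireness of $\Phi$, which the paper leaves implicit.
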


\begin{proof} We begin by first computing the Mellin transform of a power of the type $1/(1+\langle c,z\rangle)^{m+1}$.
By performing repeated integrations under the sign of integration we get
$$M_{1/(1+\langle c,z\rangle)^{m+1}}=\frac{(-1)^m}{m!}\frac{d^m}{d\lambda^m}M_{1/(\lambda+\langle c,z\rangle)}
\Big|_{\lambda=1}\,.$$

\noindent Then, recalling the formula (\ref{fraction}) and using the simple identity
$$\frac{(-1)^m}{m!}\frac{d^m}{d\lambda^m}\lambda^{s_1+\ldots+s_n-1}\Big|_{\lambda=1}=\frac{1}{m!}
\frac{\Gamma(m+1-s_1-\ldots-s_n)}{\Gamma(1-s_1-\ldots-s_n)}\,,$$
we find that 
$$M_{1/(1+\langle c,z\rangle)^{m+1}}=\frac{1}{m!}c_1^{-s_1}\cdots c_n^{-s_n}\Gamma(s_1)\cdots\Gamma(s_n)\Gamma(m+1-s_1-\ldots-s_n)\,.$$

\noindent Next we make use of the generalized partial fractions decomposition

$$\frac{1}{\prod_{k=0}^m(1+\langle a_k,z\rangle)}=m!\int_{\sigma_m}\frac{d\tau_1\cdots d\tau_m}{(1+\langle \alpha(\tau),z\rangle)^{m+1}}\,,$$
\smallskip

\noindent which occurs in the theory of analytic functionals and Fantappi\`e transforms, see for instance \cite{APS} or \cite{H3}. 
From this formula we immediately obtain 
$$ M_{1/\prod_{k=0}^m(1+\langle a_k,z\rangle)}=m!\int_{\sigma_m}M_{1/(1+\langle\alpha(\tau),z\rangle)^{m+1}} 
\,d\tau_1\cdots d\tau_m\,,$$

\noindent which yields (\ref{f}). \end{proof}
\bigskip

In particular, when $m=n=1$ and $f(z)=(1+a_0z)(1+a_1 z)$ we obtain the entire function
$$\Phi(s)=\int_0^1\frac{d\tau}{\bigl((1-\tau)a_0+\tau a_1\bigr)^s}\frac{1}{1-s}\frac{\ a_1^{1-s}\!-\,a_0^{1-s}\!\!\!}{a_1-\,a_0}\,=\frac{1}{1-s}\sum \mathrm{res}[z^{s-1}/f(z)]$$
in accordance with the formulas mentioned in the introduction above. Similarly, when $n=2$ and 
$f(z_1,z_2)=(1+a_{01}z_1+a_{02}z_2)(1+a_{11}z_1+a_{12}z_2)$ the entire function becomes
$$\Phi(s_1,s_2)=\int_0^1\frac{d\tau}{\bigl((1-\tau)a_{01}+\tau a_{11}\bigr)^{s_1}\bigl((1-\tau)a_{02}+\tau a_{12}\bigr)^{s_2}}\,.$$
Here one may remark a close connection to the classical Euler beta function $B$. Namely, if we let the coefficients $a_{02}$
and $a_{11}$ become zero, we are left with
$$\Phi(s_1,s_2)=a_{01}^{-s_1}a_{12}^{-s_2}\int_0^1(1-\tau)^{-s_1}\tau^{-s_2}d\tau=a_{01}^{-s_1}a_{12}^{-s_2}B(1-s_1,1-s_2)\,.$$
Since $B(1-s_1,1-s_2)=\Gamma(1-s_1)\Gamma(1-s_2)/\Gamma(2-s_1-s_2)$ we see that the function $\Phi$ is no longer entire. 
This is to be expected however, because the new polynomial $f(z_1,z_2)=(1+a_{01}z_1)(1+a_{12}z_2)$ has a different Newton polygon,
and the new $\Phi$ should contribute to the change of $\Gamma$-factors in the Mellin transform. In fact, when $a_{02}=a_{11}=0$
we have the formula 
$$M_{1/f}(s)=\Phi(s)\Gamma(s_1)\Gamma(s_2)\Gamma(2-s_1-s_2)=a_{01}^{-s_1}a_{12}^{-s_2}\Gamma(s_1)\Gamma(s_2)\Gamma(1-s_1)
\Gamma(1-s_2)\,.$$
\smallskip

It is not always the case that all the polar hyperplanes of the gamma functions in the representation (\ref{mero}) are actual singularities for the
Mellin transform $M_{1/f}$. It may happen that the entire function $\Phi$ has zeros that cancel out some of the poles. A very simple
example of this phenomenon is provided by the function $f(z)=1+z^m$ with $m\ge2$. In this case the substitution $z^m=w$ leads to the formula
$$M_{1/f}(s)=\frac{1}{m}\int_0^\infty\frac{w^{s/m}}{1+w}\frac{dw}{w}=\frac{1}{m\,}\Gamma(s/m)\Gamma(1-s/m)\,,$$
so the polar locus is just $m\mathbb{Z}$. In fact, the entire function $\Phi$ from (\ref{mero}) is given by
$$\Phi(s)=\frac{1}{m}\,\frac{\Gamma(s/m)}{\Gamma(s)}\frac{\Gamma(1-s/m)}{\Gamma(m-s)}\,,$$
and it has plenty of integer zeros. A slight generalization of this example is provided by the following result.
\smallskip

\begin{pro}\label{monompro} Let $f(z)=1+z^{\alpha_1}+\ldots+z^{\alpha_n}$, for some linearly independent vectors $\alpha_1,\ldots,\alpha_n\in\mathbb{Z}^n$, 
and denote by $\delta$ the non-zero determinant $\det(\alpha_{jk})$. The Mellin transform of the rational function $1/f$ is then given by
$$M_{1/f}(s)=\frac{1}{\delta}\,\Gamma(\langle\beta_1,s\rangle)\cdots\Gamma(\langle\beta_n,s\rangle)\Gamma(1-\langle\beta_1,s\rangle-\ldots-\langle\beta_n,s\rangle)\,,$$ 
where the $\beta_k$ denote the column vectors of the inverse matrix $(\alpha_{jk})^{-1}$.
\end{pro}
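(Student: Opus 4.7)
The plan is to perform the monomial change of variables $w_k=z^{\alpha_k}$ for $k=1,\ldots,n$, which transforms the denominator $f$ into the simple linear polynomial $1+w_1+\ldots+w_n$. This reduces the statement to the specialization of formula (\ref{fraction}) with all coefficients equal to $1$, namely
$$M_{1/(1+w_1+\ldots+w_n)}(t)=\Gamma(t_1)\cdots\Gamma(t_n)\Gamma(1-t_1-\ldots-t_n)\,.$$
Before applying this one checks that the hypotheses of Theorem~\ref{mel} are satisfied: on $\mathbb{R}^n_+$ every monomial $z^{\alpha_k}$ is strictly positive, so $f$ and all of its truncations $f_\Gamma$ are strictly positive there, whence $f$ is completely non-vanishing on $\mathbb{R}^n_+$; and the Newton polytope of $f$ is the $n$-simplex with vertices $0,\alpha_1,\ldots,\alpha_n$, which is full-dimensional precisely because the $\alpha_k$ are assumed linearly independent.

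The substantive step is the change of variables itself. Passing to logarithmic coordinates $x=\log z$, $y=\log w$, the map $w_k=z^{\alpha_k}$ becomes the linear isomorphism $y=Ax$, where $A$ is the matrix whose rows are the $\alpha_k$. Its absolute Jacobian is $|\delta|$, and the exponential weight transforms as $\langle s,x\rangle=\langle s,A^{-1}y\rangle=\langle A^{-T}s,y\rangle$. By the very definition of $\beta_k$ as the $k$-th column of $(\alpha_{jk})^{-1}$, the $k$-th coordinate of $A^{-T}s$ is $\sum_j(A^{-1})_{jk}s_j=\langle\beta_k,s\rangle$. Setting $t_k=\langle\beta_k,s\rangle$, the change of variables in (\ref{mellint}) therefore gives
$$M_{1/f}(s)=\frac{1}{|\delta|}\int_{\mathbb{R}^n_+}\frac{w^{t}}{1+w_1+\ldots+w_n}\,\frac{dw_1\cdots dw_n}{w_1\cdots w_n}\,,$$
and inserting the formula recalled above yields exactly the claimed product of gamma functions.

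The only real obstacle is notational bookkeeping: one must be careful that the convention under which the vectors $\alpha_k$ are identified with the rows (or columns) of $(\alpha_{jk})$ is matched correctly with the convention that $\beta_k$ is extracted as the $k$-th column of the inverse matrix, so that the identification $t_k=\langle\beta_k,s\rangle$ actually produces the right linear forms in $s$. Once the indexing is consistent this is a pure linear-algebra exercise; the constant in front is $1/|\delta|$ rather than $1/\delta$, the sign being absorbed into the orientation of the substitution.
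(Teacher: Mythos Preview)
Your proof is correct and follows essentially the same route as the paper: both perform the monomial substitution $w_k=z^{\alpha_k}$, which in logarithmic coordinates is the linear map $y=Ax$, and then invoke the known formula for the Mellin transform of $1/(1+w_1+\ldots+w_n)$ from Example~1 / formula~(\ref{fraction}). Your extra verification that $f$ is completely non-vanishing on $\mathbb{R}^n_+$ and that $\Delta_f$ is full-dimensional is a welcome sanity check (it justifies convergence of the original integral), and your remark that the Jacobian naturally produces $1/|\delta|$ rather than $1/\delta$ is a valid point that the paper glosses over.
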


\begin{proof} We make the monomial change of variables $z_1^{\alpha_{j1}}\cdots z_n^{\alpha_{jn}}=w_j$, so that $z_j=w_1^{\beta_{j1}}\cdots w_n^{\beta_{jn}}$ and
$dz/z=\delta^{-1}dw/w$. The Mellin transform can then be written
$$M_{1/f}(s)=\int_{\mathbb{R}^n_+}\frac{z^{s}}{1+z^{\alpha_1}+\ldots+z^{\alpha_n}}\frac{dz}{z}=\frac{1}{\delta}\int_{\mathbb{R}^n_+}\frac{w_1^{\langle\beta_1,s\rangle}\!\cdots \,w_n^{\langle\beta_n,s\rangle}}{1+w_1+\ldots+w_n}\frac{dw}{w}\,,$$ 
and the latter integral is of a similar form as the one in Example 1.\end{proof}
\bigskip

We point out that the Newton polytope $\Delta_f$ of the polynomial in Proposition~\ref{monompro} is a simplex with one vertex at the origin, and that its normal vectors
$\mu_1,\ldots,\mu_{n+1}$ are integer multiples of the rational vectors $\beta_1,\ldots,\beta_n$ and $-(\beta_1+\ldots+\beta_n)$. Moreover, one has $\nu_1=\ldots=\nu_n=0$ and $\nu_{n+1}=1$. In this case the entire function $\Phi$ 
occurring in (\ref{mero}) is therefore of the form
$$\Phi(s)=\frac{1}{\delta}\,\frac{\Gamma(\langle\beta_1,s\rangle)}{\Gamma(\langle\mu_1,s\rangle)}\cdots
\frac{\Gamma(\langle\beta_n,s\rangle)}{\Gamma(\langle\mu_n,s\rangle)}\frac{\Gamma(1-\langle\beta_1,s\rangle-\ldots-\langle\beta_n,s\rangle)}{\Gamma(1+\langle\mu_{n+1},s\rangle)}\,.$$
\bigskip

\section{Mellin transforms and coamoebas}

\noindent Let us return for a moment to the one-variable Mellin transform 

$$M_{1/f}(s)=\int_0^\infty\frac{z^s}{f(z)}\frac{dz}{z}=\int_{-\infty}^\infty\frac{e^{sx}}{f(e^x)}dx\,,$$
\smallskip

\noindent where we assume, as before, that the polynomial $f$ does not vanish on the positive real axis and that the real part of $s$ lies in the interior of the Newton interval $\Delta_f$. Our first claim is now that the value of the above integral remains unchanged if the set of integration is rotated slightly. In other words, for $|\theta|$ small enough one has the identity
$$\int_0^\infty\frac{z^s}{f(z)}\frac{dz}{z}=\int_{\text{Arg}^{-1}(\theta)}\frac{z^s}{f(z)}\frac{dz}{z}=\int_{-\infty}^\infty\frac{e^{s(x+i\theta)}}{f(e^{x+i\theta})}dx\,.$$
\smallskip

\noindent To verify this, we perform an integration along a closed path starting at the origin, then running along the positive real axis to the point $R$, continuing along the circle $|z|=R$ to the point $Re^{i\theta}$, and then going straight back to the origin, see Figure~4 below. Since $\theta$ is close to zero, the denominator $f$ has no zeros in the closed sector with arguments between $0$ and $\theta$. By the residue theorem the integral over the closed contour is therefore equal to zero, and since the integrand decreases fast when $|z|\to\infty$, the integral over the circular arc $C_R$ can be made arbitrarily small by choosing $R$ large enough. The integrals along the two infinite rays are thus equal as claimed.

\vskip-1cm

\vskip-4cm
\begin{figure}[H]
\hskip2.5cm
\includegraphics[height=10cm]{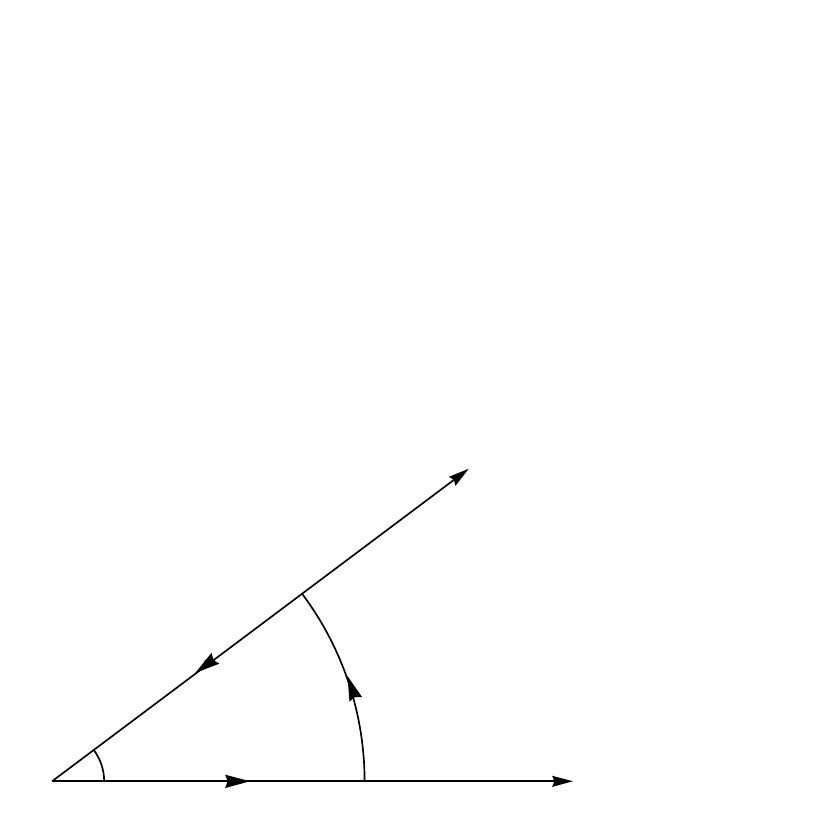}
\begin{picture}(0,0)
\put(-245.5,23.5){$\theta$}
\put(-163,49){$C_R$}
\put(-167,6){$R$}

\end{picture}

\vskip.5cm
\caption{The contour of integration in the residue computation.}
\end{figure}

From the above argument we see that the directional Mellin transform coincides with the standard one as long as the two directions $\theta$ and $0$ belong to the same connected component of the coamoeba complement $\mathbb{R}\setminus\mathcal{A}'_f$. Furthermore, it is clear that the Mellin integral over $\text{Arg}^{-1}(\theta)$ converges for \emph{every} choice of $\theta$ outside the coamoeba $\mathcal{A}'_f$. A similar residue computation as above then again shows that the directional Mellin transform only depends on which connected component of $\mathbb{R}\setminus\mathcal{A}'_f$ it is that contains $\theta$.
\smallskip

Turning to the general case $n\ge1$, there are two important differences to be observed. On the one hand we recall the condition in Theorem~\ref{expgro} that the polynomial $f$ should be completely non-vanishing on $\mathbb{R}^n_+=\text{Arg}^{-1}(0)$ in order for the integral to converge, and on the other hand we note that the coamoeba 
$\mathcal{A}'_f$ is in general not a closed set. The following result connects these two facts, and it allows us to define the directional Mellin transform 
\be\label{mellint2} \int_{\Arg^{-1}(\theta)} \frac{z^{s}}{f(z)}\frac{dz}{z}=\int_{\mathbb{R}^n} \frac{e^{\langle s,x+i\theta\rangle}}{f(e^{x+i\theta})}dx\,,\ee
for each argument $\theta$ that does not belong to the closure $\overline{\mathcal{A}'_f}$.
\smallskip

\begin{thm}\label{theta}
For any $\theta\in\mathbb{R}^n\setminus\overline{\mathcal{A}'_f}$ the polynomial $f$ is completely non-vanishing on the set ${\rm Arg}^{-1}(\theta)$.
\end{thm}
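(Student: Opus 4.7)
The plan is to argue by contraposition. Assume that for some face $\Gamma$ of $\Delta_f$ there exists a point $z_0 \in \Arg^{-1}(\theta)$ with $f_\Gamma(z_0) = 0$; I will exhibit a sequence of honest zeros of $f$ whose arguments converge to $\theta$, which forces $\theta \in \overline{\mathcal{A}'_f}$. Two cases are automatic: $\Gamma = \Delta_f$ (since then $f_\Gamma = f$, and its zero already lies in $\mathcal{A}'_f \subseteq \overline{\mathcal{A}'_f}$), and $\dim \Gamma = 0$ (vacuous, since a single monomial has no zeros in $\mathbb{C}^n_*$). So I focus on faces with $1 \le \dim \Gamma \le n-1$.

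First I would pick a vector $\mu \in \mathbb{R}^n$ in the relative interior of the outer normal cone of $\Gamma$, chosen so that the linear functional $\alpha \mapsto \langle \mu, \alpha \rangle$ attains its minimum value $\nu$ over $A$ precisely on $A \cap \Gamma$; concretely $\mu$ can be taken as a strictly positive combination of those facet normals $\mu_k$ whose facets contain $\Gamma$. Setting
\begin{equation*}
F_t(z) = t^{-\nu} f(t^\mu z) = \sum_{\alpha \in A} a_\alpha \, t^{\langle \mu, \alpha \rangle - \nu} \, z^\alpha, \qquad t > 0,
\end{equation*}
one has $F_t \to f_\Gamma$ as $t \to 0^+$ uniformly on compact subsets of $\mathbb{C}^n_*$, since the exponents $\langle \mu, \alpha \rangle - \nu$ vanish on $\Gamma$ and are strictly positive off $\Gamma$. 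Crucially, because $t > 0$, the coordinate rescaling $z \mapsto t^\mu z$ preserves arguments, so any zero of $F_t$ yields a zero of $f$ with the same argument.

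Next I would use Hurwitz's theorem to produce zeros of $F_t$ close to a suitable zero of $f_\Gamma$. The singular locus of $\{f_\Gamma = 0\} \subset \mathbb{C}^n_*$ is a proper subvariety, so its complement is dense in the zero set and $\Arg$ maps this smooth locus to a dense subset of $\mathcal{A}'_{f_\Gamma}$. For any $\varepsilon > 0$ I therefore pick a smooth zero $z_0'$ of $f_\Gamma$ with $|\Arg(z_0') - \theta| < \varepsilon$, and then a complex line $L \subset \mathbb{C}^n_*$ through $z_0'$ transverse to $\nabla f_\Gamma(z_0')$, so that $f_\Gamma|_L$ has a simple isolated zero at $z_0'$. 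Applying Hurwitz (or Rouch\'e on a small disk $D \subset L$ around $z_0'$) to the convergence $F_t|_L \to f_\Gamma|_L$, for all sufficiently small $t > 0$ there is a zero $z_t \in D$ of $F_t$ with $z_t \to z_0'$. The points $t^\mu z_t$ are then genuine zeros of $f$ whose arguments $\Arg(z_t)$ eventually lie within $2\varepsilon$ of $\theta$; since $\varepsilon$ was arbitrary, $\theta \in \overline{\mathcal{A}'_f}$.

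The main obstacle I expect is the bookkeeping around the density of smooth zeros of $f_\Gamma$: one has to verify that $\theta \in \mathcal{A}'_{f_\Gamma}$ really is approximated by arguments of smooth zeros, which relies on the fact that the singular locus of a polynomial hypersurface in $\mathbb{C}^n_*$ has complex codimension at least one within it, hence does not locally disconnect the zero set. Apart from that, the argument is essentially a standard tropical-style deformation of $f$ onto its initial form $f_\Gamma$ along the direction $\mu$, combined with the continuity of the argument map and Hurwitz's theorem on a transverse complex line.
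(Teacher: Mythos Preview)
Your overall strategy coincides with the paper's: argue by contraposition, deform $f$ toward its truncation $f_\Gamma$ by scaling along a direction normal to $\Gamma$, and then use Rouch\'e on a small complex disk to produce nearby zeros of $f$ with arguments close to $\theta$. The paper carries this out in logarithmic coordinates (translating $x_0$ by $-t\mu$ as $t\to\infty$) rather than scaling by $t^\mu$ as $t\to 0^+$, but these are the same move.

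There is one genuine gap in your detour through smooth zeros. You want a point $z_0'$ with $\nabla f_\Gamma(z_0')\ne 0$ so that $f_\Gamma|_L$ has a \emph{simple} zero on a transverse line $L$. But $f_\Gamma$ need not be squarefree: for instance, if $f(z_1,z_2)=1-2z_1+z_1^2+z_2$ then the bottom face gives $f_\Gamma=(1-z_1)^2$, and $\nabla f_\Gamma$ vanishes identically on $Z_{f_\Gamma}$, so no smooth zero in your sense exists. Your density argument for the smooth locus of the \emph{variety} is fine, but that does not give $\nabla f_\Gamma\ne 0$. The fix is to drop the simplicity requirement altogether: through the original zero $z_0$ choose any complex line $L$ on which $f_\Gamma$ is not identically zero (such lines are generic); then $z_0$ is an isolated zero of $f_\Gamma|_L$ of some multiplicity, $f_\Gamma|_L$ is bounded away from zero on a small circle around it, and Rouch\'e applied to $F_t|_L\to f_\Gamma|_L$ yields a zero of $F_t$ inside the disk. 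This is exactly what the paper does, and it makes the whole smooth-locus discussion unnecessary. (A minor terminological point: you write ``outer normal cone'' but then describe $\mu$ as a positive combination of the paper's \emph{inward} facet normals $\mu_k$; the latter is what you actually use, and it is correct.)
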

\smallskip

\begin{proof}
For any given argument vector $\theta$ we can consider the new polynomial $f^\theta(z)=f(e^{i\theta_1}z_1,\ldots, e^{i\theta_n}z_n)$. 
Observe that $\mathcal{A}'_{f^\theta}+\theta=\mathcal{A}'_f$, so that $0\in\mathcal{A}'_{f^\theta}$ if and only if $\theta\in\mathcal{A}'_{f}$, and also that $f^\theta$ is completely non-vanishing on ${\rm Arg}^{-1}(0)$ if and only if $f$ is completely non-vanishing on ${\rm Arg}^{-1}(\theta)$. This means that it actually suffices to prove the theorem for the special case $\theta = 0$.
\smallskip

Assume then that $f$ is not completely non-vanishing on the set ${\rm Arg}^{-1}(0)$, so that for some face $\Gamma$ one has $0\in\mathcal{A}'_\Gamma$. In other words,
there is an $x_0\in\mathbb{R}^n$ such that $f_\Gamma(e^{x_0})=0$. We must show that $0$ belongs to the closure $\overline{\mathcal{A}'_f}$. This is obvious if $\Gamma=\Delta_f$, so we can assume that $\dim\Gamma\le n-1$. Choose a vector $\mu\in\mathbb{Z}^n$ and an integer $\nu\in\mathbb{Z}$ such that $\langle\mu,\alpha\rangle=\nu$ for $\alpha\in\Gamma$ and $\langle\mu,\alpha\rangle<\nu$ for $\alpha\in\Delta_f\setminus\Gamma$.
Writing $g_\Gamma=f-f_\Gamma$ we then have
\begin{eqnarray*}f_\Gamma(e^{x_0-t\mu})=\sum_{\alpha\in\Gamma}a_\alpha \,e^{\langle x_0,\alpha\rangle-t\langle\mu,\alpha\rangle}=e^{-t\nu}\sum_{\alpha\in\Gamma}a_\alpha \,e^{\langle x_0,\alpha\rangle}=0\end{eqnarray*} 
\noindent and
\begin{eqnarray*} g_\Gamma(e^{x_0-t\mu})=\sum_{\alpha\in\Delta_f\setminus\Gamma}a_\alpha \,e^{\langle x_0,\alpha\rangle-t\langle\mu,\alpha\rangle}=e^{-t\nu}\!\!\!\!\sum_{\alpha\in\Delta_f\setminus\Gamma} b_\alpha\,e^{-tc_\alpha}\,,\end{eqnarray*}
where $b_\alpha=a_\alpha e^{\langle x_0,\alpha\rangle}$ and $c_\alpha=\langle\mu,\alpha\rangle-\nu >0$.
Now let $\varepsilon>0$ be given. Choose a disk $D_\varepsilon$ of radius $\varepsilon$ centered at $x_0$ and contained in a complex line on which the function 
$w\mapsto f_\Gamma(e^w)$ does not vanish identically. Then translate this disk along the real space, so that 
$D_\varepsilon-t\mu$ is a disk centered at the point $x_0-t\mu$ for some large positive number $t$. Since $f_\Gamma(e^w)$ is non-zero on the boundary of 
$D_\varepsilon$ we have $|f_\Gamma(e^w)|\ge\delta>0$ for $w\in\partial D_\varepsilon$. This means that $|f_\Gamma(e^w)|\ge\delta\,e^{-t\nu}$ on the translated circle $\partial D_\varepsilon-t\mu$. Taking $t$ large enough, we also have $|g_\Gamma(e^w)/f_\Gamma(e^w)|<1$ on $\partial D_\varepsilon-t\mu$, that is, $|g_\Gamma(e^w)|<|f_\Gamma(e^w)|$.
Rouch\'e's theorem then tells us that $f(e^w)=f_\Gamma(e^w)+g_\Gamma(e^w)$ has a zero $w_\varepsilon$ in the disk $D_\varepsilon-t\mu$. So $z_\varepsilon=e^{w_\varepsilon}$ belongs to the hypersurface $f(z)=0$. But we also know that $|\!\Arg(z_\varepsilon)|=|\mathrm{Im}\,w_\varepsilon|<\varepsilon$, and since
 $\varepsilon $ was chosen arbitrary we conclude that $0\in\overline{\mathcal{A}'_f}$.\end{proof}
\smallskip

\noindent{\bf Remark.}\ In the above proof we showed that all the facial coamoebas $\mathcal{A}'_\Gamma$ are contained in the closure $\overline{\mathcal{A}'_f}$ of the main coamoeba. It is a fact, proved by Johansson \cite{J} and independently by Nisse and Sottile \cite{NS}, that one actually has an equality
$$\bigcup_{\Gamma\subseteq\Delta_f}\mathcal{A}'_\Gamma=\overline{\mathcal{A}'_f}\,.$$
\smallskip

Using Theorems~\ref{expgro} and \ref{theta} we can now define a directional Mellin transform (\ref{mellint2}) for any $\theta$ in the complement $\mathbb{R}^n\setminus\overline{\mathcal{A}'_f}$. Just as in the one-variable case discussed earlier in the section, the various Mellin transforms will in fact be equal for all $\theta$
that belong to the same connected component of  $\mathbb{R}^n\setminus\overline{\mathcal{A}'_f}$. This can be seen by connecting two different values of $\theta$ through a polygonal path such that along each edge of the path only one component $\theta_k$ is being changed. The invariance of the Mellin transform under such a move is then a consequence of the one-variable argument.
\smallskip

ln order to  put our next theorem in a proper perspective, it seems appropriate at this juncture to recall some known facts about amoebas and Laurents series of rational functions. 
A reference for these results is \cite{FPT}. Associated with each connected component $E$ of the amoeba complement $\mathbb{R}^n\setminus\mathcal{A}_f$ is a Laurent series
representation 
$$\frac{1}{f(z)}=\sum_{\alpha\in\mathbb{Z}^n}c^E_\alpha \,z^{-\alpha}$$
of the rational function $1/f$. The coefficients of the series are given by the integrals

$$c^E_\alpha=\frac{1}{(2\pi i)^n}\int_{\Log^{-1}(x)}\frac{z^\alpha}{f(z)}\frac{dz}{z}=\int_{[-\pi,\pi]^n}\frac{e^{\langle\alpha,x+i\theta\rangle}}{f(e^{x+i\theta})}\,d\theta\,,$$
\smallskip

\noindent  where $x$ is any point in the connected component $E$. Each such Laurent series will converge in the corresponding Reinhardt domain $\Log^{-1}(E)$. We stress the fact that the amoeba $\mathcal{A}_f$ is always a closed set, so in contrast to the case of coamoebas, there is no need to take the closure of the amoeba.
\smallskip 

The following result about coamoebas and Mellin transforms provides a practically perfect analogy to the above picture for amoebas and Laurent coefficients.
 
\begin{thm}\label{inversmellin}
For any connected component $E$ of the coamoeba complement $\mathbb{R}^n\setminus\overline{\mathcal{A}'_f}$ there is an integral representation 
\begin{equation}\label{invers} \frac{1}{f(z)}=\int_{\sigma+i\mathbb{R}^n}M^E_{1/f}(s)\,z^{-s}ds\,,\end{equation}
which converges for all $z$ in the domain $\text{\rm Arg}^{-1}(E)$. Here $\sigma$ is an arbitrary point in $\mathrm{int}\,\Delta_f$ and 
\begin{equation}\label{component} M^E_{1/f}(s)=\frac{1}{(2\pi i)^n}\int_{\Arg^{-1}(\theta)}\frac{z^{s}}{f(z)}\frac{dz}{z}=\frac{1}{(2\pi i)^n}\int_{\mathbb{R}^n}\frac{e^{\langle s,x+i\theta\rangle}}{f(e^{x+i\theta})}dx\,,\end{equation}
with $\theta$ being an arbitrary point in the component $E$.
\end{thm}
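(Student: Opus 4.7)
The plan is to reduce the statement to the classical multidimensional Fourier inversion formula by the same exponential change of coordinates that relates the Mellin and Fourier--Laplace transforms in the first place. Fix $\theta\in E$ and write $w=x+i\theta$, so that $z=e^w$ and $z^{-s}=e^{-\langle s,x+i\theta\rangle}$. With $s=\sigma+it$, a direct computation gives
$$M^E_{1/f}(\sigma+it)=\frac{e^{i\langle\sigma,\theta\rangle-\langle t,\theta\rangle}}{(2\pi i)^n}\,\hat\phi(-t),$$
where $\phi(x)=e^{\langle\sigma,x\rangle}/f(e^{x+i\theta})$ and $\hat\phi$ is the usual Fourier transform. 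Substituting $ds=i^n\,dt$ into the right--hand side of (\ref{invers}) and rewriting the exponentials shows that (\ref{invers}) is formally equivalent to the Fourier inversion identity $\phi(x)=(2\pi)^{-n}\int\hat\phi(-t)e^{-i\langle t,x\rangle}dt$ applied to $\phi$.

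The next step is to verify the hypotheses that make Fourier inversion work pointwise. Since $\theta\notin\overline{\mathcal{A}'_f}$, Theorem~\ref{theta} tells us that $f$ is completely non-vanishing on $\text{Arg}^{-1}(\theta)$. Now observe that the proof of Theorem~\ref{expgro} is purely a lower bound on $|f(e^w)|$ that uses only the truncated polynomials $f_\Gamma$ and is therefore insensitive to a fixed shift of $w$ by $i\theta$: replacing $e^x$ by $e^{x+i\theta}$ throughout the argument yields the estimate $|\phi(x)|\le c\,e^{-k|x|}$ on all of $\mathbb{R}^n$, whenever $\sigma\in\operatorname{int}\Delta_f$. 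In particular $\phi\in L^1(\mathbb{R}^n)$ and $\hat\phi$ is well defined and continuous.

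To make the inverse integral in (\ref{invers}) absolutely convergent and to justify pointwise Fourier inversion, I would upgrade this to show that $\phi$ is of Schwartz class. The function $\phi$ is real-analytic, and each of its partial derivatives of any order can be written as $\phi$ times a polynomial expression in the bounded functions $\partial^\alpha f(e^{x+i\theta})/f(e^{x+i\theta})^j$. The very same exponential decay estimate then applies to every derivative of $\phi$, so $\phi\in\mathcal{S}(\mathbb{R}^n)$, hence $\hat\phi\in\mathcal{S}(\mathbb{R}^n)$, and the Fourier inversion formula holds pointwise. Translated back through the change of variables, this is exactly the desired equality $1/f(z)=\int_{\sigma+i\mathbb{R}^n}M^E_{1/f}(s)z^{-s}ds$ for any $z\in\text{Arg}^{-1}(\theta)$, with absolute convergence of the integral guaranteed by the rapid decay of $\hat\phi$.

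Finally, one has to check that the identity makes sense for \emph{all} $z\in\text{Arg}^{-1}(E)$, i.e.\ that $M^E_{1/f}$ depends only on the component $E$ and not on the chosen $\theta\in E$. This is a multidimensional version of the one-variable contour rotation used just before Theorem~\ref{theta}: given two points $\theta,\theta'$ in $E$, connect them by a polygonal path each of whose edges varies in only one coordinate $\theta_k$, and apply Cauchy's theorem one variable at a time, noting that the complete non-vanishing of $f$ on all intermediate $\text{Arg}^{-1}(\cdot)$ sheets (guaranteed by Theorem~\ref{theta} since the path stays in $E\subset\mathbb{R}^n\setminus\overline{\mathcal{A}'_f}$) together with the exponential decay makes the arcs at infinity disappear. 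The main obstacle is the step showing the Schwartz-class property of $\phi$ uniformly along the path; everything else is bookkeeping around the change of variables.
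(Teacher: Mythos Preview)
Your plan is essentially the paper's own proof: reduce to Fourier inversion, show the integrand $\phi(x)=e^{\langle\sigma,x\rangle}/f(e^{x+i\theta})$ lies in $\mathcal S(\mathbb R^n)$ using the exponential lower bound from Theorem~\ref{expgro}, and handle the independence of $\theta\in E$ by the one-variable contour rotation along a polygonal path.

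The one place where your argument is looser than the paper's is the Schwartz step. Your claim that the ratios $\partial^\alpha f(e^{x+i\theta})/f(e^{x+i\theta})^j$ are bounded is plausible but not immediate; it amounts to a boundary case of Theorem~\ref{expgro} (numerator with Newton polytope contained in, not strictly inside, that of the denominator) and would itself need the complete non-vanishing hypothesis and an argument of the same flavor as Theorem~\ref{expgro}. The paper avoids this entirely: it expands $\partial_{x_k}\phi$ as a finite sum of terms $\alpha_k a_\alpha\,e^{\langle\sigma+\alpha,x\rangle}/f(e^x)^2$, observes that $\sigma+\alpha\in\mathrm{int}(2\Delta_f)=\mathrm{int}\,\Delta_{f^2}$ for every $\alpha\in A$, and applies Theorem~\ref{expgro} directly to each term with $f^2$ in place of $f$; induction then handles higher derivatives. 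Replacing your boundedness assertion with this observation makes the proof complete.
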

\smallskip

\begin{proof}
From Theorem~\ref{theta} and (an obvious generalization of) Theorem~\ref{expgro} we see that the integral (\ref{component}) converges, and from the discussion preceding Theorem~\ref{theta} we also know that the value of (\ref{component}) is independent of the particular choice of point $\theta\in E$.

In order to prove the identity (\ref{invers}) it suffices to verify that, for all $s=\sigma+it$ such that $\sigma\in\mathrm{int}(\Delta_f)$, the function 
$x\to e^{\langle s,x+i\theta\rangle}/f(e^{x+i\theta})$ is in the Schwartz space $\mathcal{S}(\mathbb{R}^n)$ of rapidly decreasing functions. Then the result follows from well known facts about inversion of Fourier transforms, see Thm $7.1.5$ in \cite{H2}.
\smallskip

For simplicity, and without loss of generality, we assume that $\theta=0$. We have $|e^{\langle s,x\rangle}/f(e^{x})|=e^{\langle \sigma,x\rangle}/|f(e^{x})|$, and from the inequality (\ref{olik}), which we established in the proof of Theorem~\ref{expgro}, we see that $e^{\langle s,x\rangle}/f(e^{x})$ is an exponentially dercreasing function. 
It remains to verify that all its partial derivatives have the same property. Computing a typical derivative, we get
\begin{equation}\label{deriv}\frac{\partial}{\partial x_k}\Bigl(\frac{e^{\langle\sigma,x\rangle}}{f(e^x)}\Bigr)=\frac{\,\sigma_k\,e^{\langle\sigma,x\rangle}}{f(e^x)}-\frac{e^{\langle\sigma+e_k,x\rangle}f'_k(e^x)}{f(e^x)^2}\,,\end{equation}
where $f'_k$ denotes the derivative of the polynomial $f$ with respect to $z_k$. Here the first term one the right hand side is just a constant times the original function, and the second term is of the form
$$\sum_{\alpha\in A}\frac{\alpha_ka_\alpha\, e^{\langle\sigma+\alpha,x\rangle}}{f(e^x)^2}\,.$$
The Newton polytope of the denominator is $\Delta_{f^2}=2\Delta_f$, so $\sigma+\alpha\in\text{int}\,\Delta_{f^2}$ for every $\alpha\in A$, and hence each term in the sum satisfies the conditions of Theorem~ \ref{expgro}. This means that the derivative (\ref{deriv}) is a finite sum of functions to which we can apply Theorem~\ref{expgro} and the inequality 
(\ref{olik}). By induction this implies that all derivatives of $e^{\langle s,x\rangle}/f(e^{x})$ decrease exponentially. 
\end{proof}

\noindent\textbf{Remark}. It is clear that if $E'$ is a connected component of $\mathbb{R}^n\setminus\overline{\mathcal{A}'_f}$ that is obtained by just translating
another component $E$  by $2\pi e_k$, then the corresponding two Mellin transforms are related by the simple formula
$$M^{E'}_{1/f}(s)=e^{2\pi i s_k}M^E_{1/f}(s)\,.$$
In general however, the relations between the various Mellin transforms associated with different connected components are rather complicated. Furthermore, it is worth mentioning that all the connected components of $\mathbb{R}^n\setminus\overline{\mathcal{A}'_f}$ are convex sets. This fact follows for instance from the Bochner tube theorem, see \cite{Bo}.
Finally, we point out that Theorem~\ref{inversmellin} can also be proved by using results from Antipova \cite{A}.
\bigskip

\section{Hypergeometry}

\noindent In this final section we shall consider the dependence of the Mellin transform, and in particular of the entire function $\Phi$, on the coefficients $a=\{a_\alpha\}$ of the polynomial $f$. In order to emphasize this dependence we are here going to write $\Phi(a,s)$ rather than just $\Phi(s)$. The crucial observation will be that, with respect to the variables $a$,  the function $\Phi$ is an $A$-hypergeometric function in the sense of Gelfand, Kapranov and Zelevinsky. More precisely, $a\mapsto\Phi(a,s)$ satisfies the
 $A$-hypergeometric system of partial differential equations with homogeneity parameter $\beta=(-1,-s_1,-s_2,\ldots,-s_n).$ 
\smallskip

Let us recall the structure of the $A$-hypergeometric system. Our starting point is the subset $A\subset\mathbb{Z}^n$ of exponent vectors occurring in the expression (\ref{polyn}) for the polynomial $f$. We introduce a numbering $\alpha_1,\ldots,\alpha_N$ of the elements of $A$, with each $\alpha_k=(\alpha_{1k},\ldots,\alpha_{nk})\in\mathbb{Z}^n$. Abusing the notation slightly, we write $A$ also for the $(1+n)\times N$-matrix whose column vectors are $(1,\alpha_k)$. For any vector $v\in\mathbb{Z}^n$ we denote by $v^+$ and $v^-$ the vectors obtained from $v$ by replacing each component $v_k$ by $\max(v_k,0)$ and $\max(-v_k,0)$ respectively, so that $v=v^+\!-v^-$.
\smallskip

\begin{dfn} Let $A$ denote a subset $\{\alpha_1,\ldots,\alpha_N\}\subset\mathbb{Z}^n$ and the associated $(1+n)\times N$-matrix as above. The $A$-\emph{hypergeometric system} of differential equations with homogeneity parameter $\beta\in\mathbb{C}^n$ is then given by
$$\square_b F(a)=0, \quad b\in\mathbb{Z}^N,\ Ab=0, \qquad\text{and}\qquad  E_j^\beta F(a) = 0, \quad j=0,1,\ldots,n\,,$$
where the differential operators $\square_b$ and $E^\beta_j$ are given by
$$\square_b=\Bigl(\frac{\partial}{\partial a}\Bigr)^{\!b^+}\!\!\!-\,\Bigl(\frac{\partial}{\partial a}\Bigr)^{\!b^-}\quad\text{and}\quad E^\beta_j=\sum_{k=1}^N\alpha_{jk}\,a_k\frac{\partial}{\partial a_k}\ -\ \beta_j\,.$$
An analytic function $F$ that solves the system is called $A$-hypergeometric with homogeneity parameter $\beta$.
\end{dfn}\smallskip

\noindent{\bf Remark.}\ We are assuming $N\ge1+n$, and as soon as this inequality is strict there are of course infinitely many vectors $b$ satisfying $Ab=0$, but it is a known fact, see \cite{SST}, that the system is in fact determined by a finite number of operators $\square_b$.
\smallskip

Let us now, for a given choice of coefficients $a$, consider an entire function $s\mapsto\Phi(a,s)$ as described in Theorems~\ref{mel} and \ref{inversmellin}. We want to study what happens when we start varying $a$. Recall from \cite{GKZ} and \cite{GKZa} the notion of the principal $A$-determinant $E_A$, also known as the full $A$-discriminant. It is a polynomial in the variables $a$, with the property that its zero set $\Sigma_A\subset\mathbb{C}^N$ contains the singular locus of all $A$-hypergeometric functions.
\smallskip

\begin{thm}\label{hyper}
Take $a\in\mathbb{C}^N\setminus\Sigma_A$ and let $E$ be a connected component of $\mathbb{R}^n\setminus\overline{\mathcal{A}'_f}$, with $f$ being the polynomial
$f(z)=a_1z^{\alpha_1}+\ldots+a_Nz^{\alpha_N}$. Also take $s\in\mathbb{C}^n$ with $\text{\rm Re}\,s\in\text{\rm int}\,\Delta_f$. Then the analytic germ
$$\Phi(a,s)=\frac{1}{\prod_k\Gamma(\langle\mu_k,s\rangle-\nu_k)}\int_{\text{\rm Arg}^{-1}(\theta)}\frac{z^s}{\sum_k a_k z^{\alpha_k}}\frac{dz}{z}\,,\qquad\theta\in E\,, $$
has a (multivalued) analytic continuation to $(\mathbb{C}^N\setminus\Sigma_A)\times\,\mathbb{C}^n$ which is everywhere $A$-hypergeometric in $a$ with varying homogeneity parameter $(-1,-s_1,\ldots,-s_n)$.
\end{thm}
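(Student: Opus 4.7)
The plan is to first verify the $A$-hypergeometric equations directly for the bare integral
\[
I(a,s) := \int_{\Arg^{-1}(\theta)} \frac{z^{s}}{f(z)}\,\frac{dz}{z}
\]
on the open set of parameters $a$ where its convergence is guaranteed by Theorems~\ref{expgro} and \ref{theta}. Since the prefactor $1/\prod_k\Gamma(\langle\mu_k,s\rangle-\nu_k)$ depends only on $s$, it commutes with every $A$-hypergeometric operator in $a$, so $\Phi(a,s)$ inherits $A$-hypergeometricity from $I$. Analytic continuation to the full domain $(\mathbb{C}^N\setminus\Sigma_A)\times\mathbb{C}^n$ will then follow by combining the regularity theory of holonomic $A$-hypergeometric systems with the entireness in $s$ supplied by Theorem~\ref{mel}.

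For the Euler equations I pass to the logarithmic coordinates $w=x+i\theta$. A direct calculation gives, for $j\ge1$,
\[
\sum_{k=1}^N \alpha_{jk}\,a_k\,\frac{\partial}{\partial a_k}\Bigl(\frac{1}{f(e^{w})}\Bigr)\,=\,-\,\frac{\partial_{w_j}f(e^{w})}{f(e^{w})^{2}}\,=\,\frac{\partial}{\partial w_j}\Bigl(\frac{1}{f(e^{w})}\Bigr),
\]
while for the all-ones top row (index $j=0$, corresponding to $\alpha_{0k}\equiv1$) one obtains $-1/f$. Integrating by parts in $x_j$ transfers the $w_j$-derivative onto $e^{\langle s,w\rangle}$, producing a factor $-s_j$; the boundary terms vanish by the exponential decay (\ref{olik}) established in Theorem~\ref{expgro}. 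This gives $E_j^\beta I=0$ with $\beta=(-1,-s_1,\ldots,-s_n)$.

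For the toric equations, an easy induction yields $\partial_a^{c}(1/f)=(-1)^{|c|}|c|!\,z^{\sum_k c_k\alpha_k}/f^{|c|+1}$ for every $c\in\mathbb{N}^{N}$. The relation $Ab=0$ decomposes into $|b^+|=|b^-|$ (top row of $A$) and $\sum_k b_k^{+}\alpha_k=\sum_k b_k^{-}\alpha_k$ (remaining rows), which is exactly what makes the integrands of $\partial_a^{b^+}I$ and $\partial_a^{b^-}I$ coincide, so $\square_b I=0$. Differentiation under the integral sign is legitimate because after any number of $a$-derivatives the denominator is a higher power of $f$, so Theorem~\ref{expgro} still applies and, if anything, improves convergence.

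It remains to extend $\Phi$ to the stated domain. For each fixed $s$ with $\Re s\in\mathrm{int}\,\Delta_f$, the germ $a\mapsto\Phi(a,s)$ is $A$-hypergeometric on the open subset where the integral converges; by the regularity and holonomicity of the $A$-hypergeometric system \cite{GKZ,SST}, it extends uniquely as a multivalued analytic function on $\mathbb{C}^N\setminus\Sigma_A$, remaining $A$-hypergeometric throughout. For each $a$ in the initial locus, Theorem~\ref{mel} gives the entire continuation in~$s$. The main obstacle is showing that these two continuations fit together into a single multivalued analytic function on $(\mathbb{C}^N\setminus\Sigma_A)\times\mathbb{C}^n$; one argues that analytic continuation in $a$ can be performed along paths chosen independently of $s$, that the defining equations $\square_b$ and $E_j^\beta$ depend holomorphically on the parameter~$s$, and that the entireness in $s$ on the starting open set propagates along these paths. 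The output is a multivalued function which is jointly analytic, entire in~$s$, and $A$-hypergeometric in~$a$ with varying homogeneity parameter $(-1,-s_1,\ldots,-s_n)$, as required.
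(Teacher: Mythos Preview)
Your argument is correct and follows essentially the same route as the paper: verify the toric equations $\square_b$ by differentiating under the integral sign using $|b^+|=|b^-|$ and $\sum b_k^+\alpha_k=\sum b_k^-\alpha_k$, verify the Euler equations, then invoke Theorem~\ref{mel} for entireness in $s$ and the general regularity theory of $A$-hypergeometric systems for the multivalued continuation in $a$. The only cosmetic difference is in the Euler equations: the paper uses the dilation $z_j\mapsto\lambda z_j$ and differentiates in $\lambda$, whereas you pass to logarithmic coordinates and integrate by parts in $x_j$; since the dilation becomes a translation $x_j\mapsto x_j+\log\lambda$ in these coordinates, the two computations are the same.
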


\begin{proof} First of all it is clear that $\theta$ will be disjoint from $\overline{\mathcal{A}'_f}$ also for poly\-nomials $f$ with coefficients $a_k$ near the original ones, say in a small ball $B(a)$, so that the integral does indeed define an analytic germ $\Phi(a,s)$. From (a straightforward generalization of) our Theorem~\ref{mel} we also know that $\Phi$ is extendable as an entire function with respect to the variables $s$. In other words, we already have an analytic extension of $\Phi$ to the infinite cylinder $B(a)\times \mathbb{C}^n$. 
\smallskip

Let us next verify that $\Phi$ is an $A$-hypergeometric function with the correct homogeneity parameter. When doing this we first fix $s$ at an arbitrary value with $\text{Re}\,s\in\text{int}\,\Delta_f$, hence in particular away from the polar hyperplanes of the gamma functions. Then the function in front of the integral is just a non-zero constant and we can deal directly with the integral, by differentiation under the integral sign. 
\smallskip

Notice that the condition that $Ab=0$ amounts to the two identities $|b^+|=|b^-|$ and $\langle b^+,\alpha\rangle=\langle b^-,\alpha\rangle$, were we have used the shorthand notation $|b^\pm|=\sum b^\pm_k$ and $\langle b^\pm,\alpha\rangle=\sum b^\pm_k\alpha_k$. Computing iterated derivatives of the integrand $1/f$ in the Mellin integral we get

$$\Bigl(\frac{\partial}{\partial a}\Bigr)^{b^\pm}\!\frac{1}{\sum a_k\,z^{\alpha_k}}\ =\ (-1)^{|b^\pm|}|b^\pm|!\,\frac{z^{\langle b^\pm,\,\alpha\rangle}}{(\sum a_k\,z^{\alpha_k})^{1+|b^\pm|}}\,, $$
\smallskip

\noindent and since here the right hand side is independent of the choice of sign in $b^\pm$, so is the left hand side. This means that $\square_b (1/f)=0$, and hence we also have $\square_b \Phi=0$.
\smallskip

It is obvious that $\Phi$ is homogeneous of degree $-1$ with respect to the variables $a_k$. To check the other homogeneities one can integrate by parts in the integral. As in our proof of Theorem~\ref{mel} this can be efficiently done by dilating the variables by means of a parameter $\lambda$. For example, making the dilation $z_j\mapsto\lambda z_j$ we get
$$\int_{\text{\rm Arg}^{-1}(\theta)}\frac{z^s}{\sum_k a_k z^{\alpha_k}}\frac{dz}{z}\ =\ \lambda^{s_j}\!\int_{\text{\rm Arg}^{-1}(\theta)}\frac{z^s}{\sum_k \lambda^{\alpha_{jk}}a_k z^{\alpha_k}}\frac{dz}{z}\,.$$
Differentiating both sides of this identity with respect to $\lambda$ and then putting $\lambda=1$, we find that
$$0=s_j\Phi+\sum_{k=1}^N\alpha_{jk}\,a_k\frac{\partial}{\partial a_k}\Phi\,,$$
and hence $E_j^\beta\Phi(a,s)=0$, with $\beta_j=-s_j$ as claimed.
\smallskip

We have thus established that $\Phi$ is an $A$-hypergeometric analytic function in the product domain $B(a)\times(\text{int}\,\Delta_f+i\,\mathbb{R}^n)$, and by uniqueness of analytic continuation its extension to the cylinder $B(a)\times\mathbb{C}^n$ will remain $A$-hypergeometric. Next, by the general theory of $A$-hypergeometric functions one has, for each fixed $s$, a (typically multivalued) analytic continuation of $a\mapsto\Phi(a,s)$ from $B(a)$ to all of $\mathbb{C}^N\setminus\Sigma_A$. Well known results on analytic functions of several variables then tell us that these continuations will still depend analytically on $s$, so we have achieved the desired analytic continuation to the full product domain $(\mathbb{C}^N\setminus\Sigma_A)\times\,\mathbb{C}^n$. The uniqueness of analytic continuation again guarantees that $\Phi$ will everywhere satisfy the $A$-hypergeometric system with the homogeneity parameter $(-1,-s_1.\ldots,-s_n)$.
\end{proof}
\smallskip

Related integral representations of $A$-hypergeometric functions have been considered by several authors, see for instance \cite{GKZb} and \cite{B}.
It is probably instructive to examine a concrete special instance of the above theorem, and we choose to present the case of the classical Gauss hypergeometric function.
\medskip

\noindent\textbf{Example 3.} Take $A=\{(0,0),(1,0),(0,1),(1,1)\}$ to consist of the four corners of the unit square in the first quadrant. It is easy to check that in this case 
$\Sigma_A$ is given by the equation $E_A(a)=a_1a_2a_3a_4(a_1a_4-a_2a_3)=0$.
Then consider the polynomial $f(z)=a_1+a_2z_1+a_2z_2+a_4z_1z_2$ together with its associated Mellin transform 
\begin{equation}\label{exint}M_{1/f}(a,s)=\int_0^\infty\!\!\!\!\int_0^\infty\frac{z_1^{s_1}z_2^{s_2}}{a_1+a_2z_1+a_3z_2+a_4z_1z_2}\frac{dz_1dz_2}{z_1z_2}\,.\end{equation}
Let us compute this transform for simplicity first in the case $a_1=a_2=a_3=1$. Writing $f$ as $(1+z_2)+(1+a_4z_2)z_1$, we can use formula (\ref{fraction}) and first perform the integration with respect to $z_1$. This yields the expression
$$\Gamma(s_1)\Gamma(1-s_1)\int_0^\infty(1+z_2)^{s_1-1}(1+a_4z_2)^{-s_1}z_2^{s_2-1}dz_2$$
for the Mellin transform (\ref{exint}). Re-writing the integrand and expanding in a power series we find that the above integral equals
$$\int_0^\infty\Bigl(1+\frac{(a_4-1)\,z_2}{1+z_2}\Bigr)^{-s_1}\frac{z_2^{s_2-1}dz_2}{1+z_2}=\sum_{k\ge0}\frac{\Gamma(1-s_1)}{\Gamma(1-s_1-k)\,k!}(a_4-1)^k\!\int_0^\infty\!\frac{z_2^{s_2+k-1}dz_2}{(1+z_2)^{1+k}},$$
so using the formula
$$\int_0^\infty\!\frac{w^{t}}{(1+w)^{1+k}}\frac{dw}{w}=\frac{1}{k!}\,\Gamma(t)\Gamma(1+k-t)$$
we find that (\ref{exint}) can be expressed as $\Gamma(s_1)\Gamma(1-s_1)\Gamma(s_2)\Gamma(1-s_2)$ times

$$\sum_{k\ge0}\frac{\Gamma(1-s_1)\Gamma(s_2+k)}{\Gamma(1-s_1-k)\Gamma(s_2)(k!)^2}(a_4-1)^k
=\sum_{k\ge0}\frac{\Gamma(s_1+k)\Gamma(s_2+k)}{\Gamma(s_1)\Gamma(s_2)(k!)^2}(1-a_4)^k\,.$$
\smallskip

\noindent In other words, we have shown that $\Phi(1,1,1,a_4,s_1,s_2)={}_2F_1(s_1,s_2;1;1-a_4)$. Using the homogeneities if $\Phi$, corresponding to the row vectors of the matrix 
$$A=\left(\begin{array}{ccccc}
1 & 1 & 1 & 1 \\
0 & 1 & 0 & 1 \\
0 & 0 & 1 & 1
\end{array}\right)$$
and the homogeneity parameter $(-1,-s_1,-s_2)$, we then easily recover the more general formula
\begin{equation}\label{fi}\Phi(a,s)= a_1^{s_1+s_2-1}a_2^{-s_1}a_3^{-s_2}\,{}_2F_1\bigl(s_1,s_2;1;1-\frac{a_1a_4}{a_2a_3}\bigr)\,.\end{equation}

\noindent Notice that the three singular points $0$, $1$ and $\infty$ for the Gauss function correspond to the factors $a_1a_4-a_2a_3$, $a_1a_4$ and $a_2a_3$ of the principal
$A$-determinant $E_A$.

\bigskip

\noindent The $A$-hypergeometric system consists in this case of the single binomial equation

$$\left(\frac{\partial^2}{\partial a_1\partial a_4}-\frac{\partial^2}{\partial a_2\partial a_3}\right)\Phi(a,s)=0\,,$$ 
\smallskip

\noindent together with the three homogeneity equations

$$\Biggl\{\begin{array}{rl}
(a_1\partial/\partial a_1+a_2\partial/\partial a_2+a_3\partial/\partial a_3+a_4\partial/\partial a_4+1)\,\Phi(a,s)&\!\!=\,\,0\,,\\
(a_2\partial/\partial a_2+a_4\partial/\partial a_4+s_1)\,\Phi(a,s)&\!\!=\,\,0\,,\\
(a_3\partial/\partial a_3+a_4\partial/\partial a_4+s_2)\,\Phi(a,s)&\!\!=\,\,0\,.
\end{array}$$
\smallskip

Let us end by considering what happens as one of the variables $a_k$ vanishes. From the Gauss hypergeometric theorem one knows that, for $\sigma_1+\sigma_2<1$, there is an identity ${}_2F_1\bigl(s_1,s_2;1;1)=\Gamma(1-s_1-s_2)/(\Gamma(1-s_1)\Gamma(1-s_2))$, so setting $a_4=0$ in the above formula (\ref{fi}) we get
$$\Phi(a_1,a_2,a_3,0,s_1,s_2)= a_1^{s_1+s_2-1}a_2^{-s_1}a_3^{-s_2}\frac{\Gamma(1-s_1-s_2)}{\Gamma(1-s_1)\Gamma(1-s_2)}\,.$$
This of course fits beautifully with the fact that the Mellin transform $M_{1/f}$ for the polynomial $f(z_1,z_2)=a_1+a_2z_1+a_3z_2$ is equal to
$$a_1^{s_1+s_2-1}a_2^{-s_1}a_3^{-s_2}\,\Gamma(s_1)\Gamma(s_2)\Gamma(1-s_1-s_2)\,,$$
compare with formula (\ref{fraction}) above.
\smallskip

To treat the case $a_1=0$ we can use Euler's hypergeometric transformation ${}_2F_1(a,b;1;z)=(1-z)^{1-a-b}{}_2F_1(1-a,1-b;1;z)$ and re-write formula (\ref{fi}) as
$$\Phi(a,s)= a_2^{s_2-1}a_3^{s_1-1}a_4^{1-s_1-s_2}\,{}_2F_1\bigl(1-s_1,1-s_2;1;1-\frac{a_1a_4}{a_2a_3}\bigr)\,.$$
Again using the hypergeometric theorem of Gauss, now for $(1-\sigma_1)+(1-\sigma_2)<1$ or equivalently $\sigma_1+\sigma_2>1$, we find that 
$$\Phi(0,a_2,a_3,a_4,s_1,s_2)= a_2^{s_2-1}a_3^{s_1-1}a_4^{1-s_1-s_2}\frac{\Gamma(s_1+s_2-1)}{\Gamma(s_1)\Gamma(s_2)}\,,$$
which can be seen to concord with the formula for the Mellin transform $M_{1/f}$ with $f(z_1,z_2)=a_2z_1+a_3z_2+a_4z_1z_2$.
Finally, the other two Euler transformations 
$${}_2F_1(a,b;1;z)=(1-z)^{-a}{}_2F_1(a,1-b;1;z/(z-1))=(1-z)^{-b}{}_2F_1(1-a,b;1;z/(z-1))$$
similarly lead to the formulas
$$\Phi(a_1,0,a_3,a_4,s_1,s_2)= a_1^{s_2-1}a_3^{s_1-s_2}a_4^{-s_1}\frac{\Gamma(s_2-s_1)}{\Gamma(1-s_1)\Gamma(s_2)}\,,$$
and
$$\Phi(a_1,a_2,0,a_4,s_1,s_2)= a_1^{s_1-1}a_2^{s_2-s_1}a_4^{-s_2}\frac{\Gamma(s_1-s_2)}{\Gamma(s_1)\Gamma(1-s_2)}\,.$$

\bigskip\bigskip

\end{document}